\newcommand{\image}{u}
\newcommand{\WEner}{{\mathcal{W}}}
\newcommand{\Econt}{{\mathcal{E}}}
\newcommand{\Ediscr}{{\mathbf{E}}}
\newcommand{\R}{{\mathbb{R}}}
\renewcommand{\d}{{\,\mathrm{d}}}
\newcommand{\tr}{{\mathrm{tr}}}
\newcommand{\compdom}{{D}}
\newcommand\restr[2]{{\left.\kern-\nulldelimiterspace #1 \vphantom{\big|}\right|_{#2}}}
\newcommand{\manifold}{{\mathcal M}}
\newcommand{\notinclude}[1]{}
\newcommand{\interpol}{{\mathcal I}}
\newcommand{\Bezier}{{\mathcal B}}
\newcommand{\Interpol}{{\mathbf I}}
\newcommand{\BEzier}{{\mathbf B}}
\def\XXint#1#2#3{{\setbox0=\hbox{$#1{#2#3}{\int}$}
\vcenter{\hbox{$#2#3$}}\kern-.5\wd0}}
\newcommand{\beq}{\begin{equation*}}
\newcommand{\eeq}{\end{equation*}}
\newcommand{\beqn}{\begin{equation}}
\newcommand{\eeqn}{\end{equation}}
\newcommand{\beqa}{\begin{eqnarray*}}
\newcommand{\eeqa}{\end{eqnarray*}}
\newcommand{\beqan}{\begin{eqnarray}}
\newcommand{\eeqan}{\end{eqnarray}}
\newtheorem{theorem}{Theorem}[section]
\newtheorem{remark}{Remark}[theorem]
\def\onedot{.}
 \def\Eg{\emph{E.g}\onedot}
\def\ie{\emph{i.e}\onedot} 
\def\cf{\emph{cf}\onedot}
\def\etal{\emph{et al}\onedot}
\begin{document}

\title{B\'{e}zier curves in the space of images}

\author{
 Alexander Effland, Martin Rumpf, Stefan Simon, Kirsten Stahn, Benedikt Wirth
}

\maketitle

\begin{abstract}
B\'{e}zier curves are a widespread tool for the design of curves in Euclidian space.
This paper generalizes the notion of B\'{e}zier curves to the infinite-dimensional space of images.
To this end the space of images is equipped with a Riemannian metric which measures
the cost of image transport and intensity variation in the sense of the metamorphosis model \cite{MiYo01}.
B\'ezier curves are then computed via the Riemannian version of de Casteljau's algorithm, which is based on a 
hierarchical scheme of convex combination along geodesic curves. 
Geodesics are approximated using a variational discretization of the Riemannian path energy.
This leads to a generalized de Casteljau method to compute suitable discrete B\'{e}zier curves in image space.
Selected test cases demonstrate qualitative properties of the approach. 
Furthermore, a B\'{e}zier  approach for the modulation of face interpolation and 
shape animation via image sketches is presented. 
\end{abstract}

\begin{figure}[t]
\centering
\resizebox{1.0\linewidth}{!}{
\begin{tikzpicture}[x=1cm, y=1cm]
\draw[-, very thick] (6,0.5) --(8.5,3) -- (11.25,3) -- (13.75,0.5);
  
\draw (6,0.5) coordinate (A);
\draw (8.5,3) coordinate (B);
\draw (11.25,3) coordinate (C);
\draw (13.75,0.5) coordinate (D);
\draw [very thick](A) .. controls (B) and (C) .. (D);
\draw (6,0.5) node[circle, minimum size = 0.5cm, fill=yellow!20] {$ A $};
\draw (8.5,3) node[circle, minimum size = 0.5cm, fill=yellow!20] {$ B $};
\draw (11.25,3) node[circle, minimum size = 0.5cm, fill=yellow!20] {$ C $};
\draw (13.75,0.5) node[circle, minimum size = 0.5cm, fill=yellow!20] {$ D $};
 \draw (6.9482421875,1.3203125)[below] node[circle, minimum size = 0.05cm, fill=blue!20] {\small$ 1 $};
 \draw (7.9140625,1.90625)[below] node[circle, minimum size = 0.05cm, fill=blue!20] {\small$2 $};
 \draw (8.8916015625,2.2578125)[below] node[circle, minimum size = 0.05cm, fill=blue!20] {\small$3 $};
 \draw (9.875,2.375)[below] node[circle, minimum size = 0.05cm, fill=blue!20] {\small$4 $};
 \draw (10.8583984375,2.2578125)[below] node[circle, minimum size = 0.05cm, fill=blue!20] {\small$5 $};
 \draw (11.8359375,1.90625)[below] node[circle, minimum size = 0.05cm, fill=blue!20] {\small$6 $};
 \draw (12.8017578125,1.3203125)[below] node[circle, minimum size = 0.05cm, fill=blue!20] {\small$7 $};
  
 \draw (6.8333333,4/3)[above] node[circle, minimum size = 0.05cm, fill=green!20] {\small$1 $};
 \draw (23/3,2.16666666)[above] node[circle, minimum size = 0.05cm, fill=green!20] {\small$2 $};
 \draw (9.875,3)[above] node[circle, minimum size = 0.05cm, fill=green!20] {\small$4 $};
 \draw (12.08333333,2.16666666)[above] node[circle, minimum size = 0.05cm, fill=green!20] {\small$6 $};
 \draw (12.916666666,4/3)[above] node[circle, minimum size = 0.05cm, fill=green!20] {\small$7 $};
 
\draw (-1.45 , 1.75 ) node [align = center] {\includegraphics[width=1.9cm, keepaspectratio]{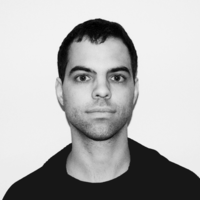}};
\draw (0.55, 1.75) node [align = center] {\includegraphics[width=1.9cm, keepaspectratio]{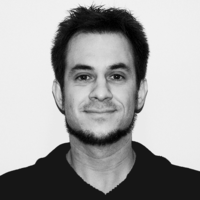}};
\draw (2.55, 1.75) node [align = center] {\includegraphics[width=1.9cm, keepaspectratio]{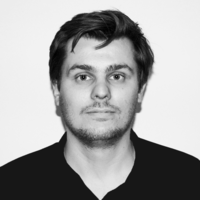}};
\draw (4.55, 1.75) node [align = center] {\includegraphics[width=1.9cm, keepaspectratio]{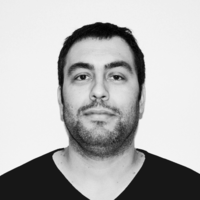}};
\draw (-1.45,-0.9) node [align = center] {\includegraphics[width=1.8cm, keepaspectratio]{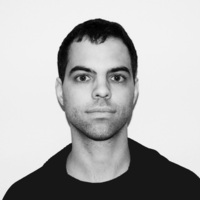}};
\draw (0.45,-0.9) node [align = center] {\includegraphics[width=1.8cm, keepaspectratio]{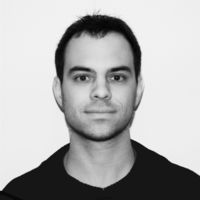}};
\draw (2.35,-0.9) node [align = center] {\includegraphics[width=1.8cm, keepaspectratio]{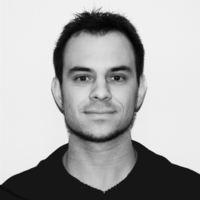}};
\draw (4.25,-0.9) node [align = center] {\includegraphics[width=1.8cm, keepaspectratio]{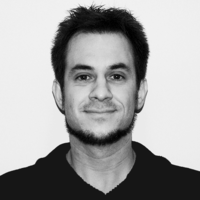}};
\draw (6.15,-0.9) node [align = center] {\includegraphics[width=1.8cm, keepaspectratio]{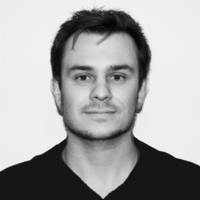}};
\draw (8.05,-0.9) node [align = center] {\includegraphics[width=1.8cm, keepaspectratio]{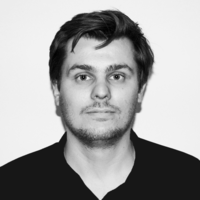}};
\draw (9.95,-0.9) node [align = center] {\includegraphics[width=1.8cm, keepaspectratio]{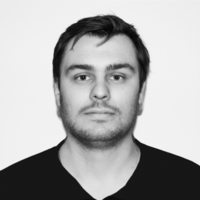}};
\draw (11.85,-0.9) node [align = center] {\includegraphics[width=1.8cm, keepaspectratio]{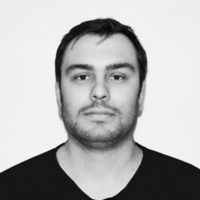}};
\draw (13.75,-0.9) node [align = center] {\includegraphics[width=1.8cm, keepaspectratio]{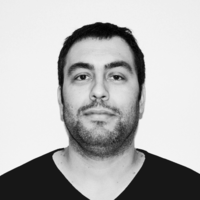}};
\draw (-1.45,-2.9) node [align = center] {\includegraphics[width=1.8cm, keepaspectratio]{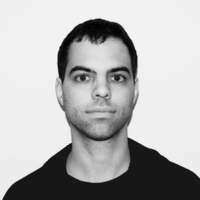}};
\draw (0.45,-2.9) node [align = center] {\includegraphics[width=1.8cm, keepaspectratio]{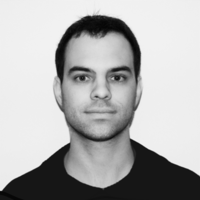}};
\draw (2.35,-2.9) node [align = center] {\includegraphics[width=1.8cm, keepaspectratio]{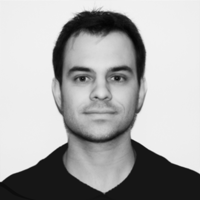}};
\draw (4.25,-2.9) node [align = center] {\includegraphics[width=1.8cm, keepaspectratio]{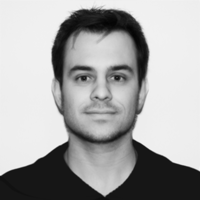}};
\draw (6.15,-2.9) node [align = center] {\includegraphics[width=1.8cm, keepaspectratio]{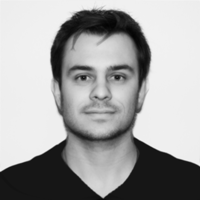}};
\draw (8.05,-2.9) node [align = center] {\includegraphics[width=1.8cm, keepaspectratio]{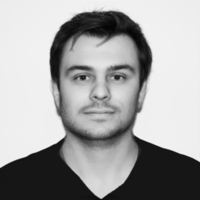}};
\draw (9.95,-2.9) node [align = center] {\includegraphics[width=1.8cm, keepaspectratio]{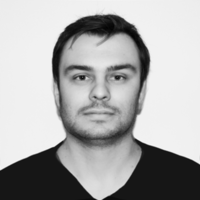}};
\draw (11.85,-2.9) node [align = center] {\includegraphics[width=1.8cm, keepaspectratio]{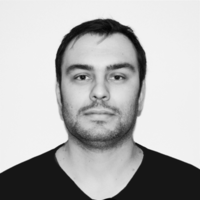}};
\draw (13.75,-2.9) node [align = center] {\includegraphics[width=1.8cm, keepaspectratio]{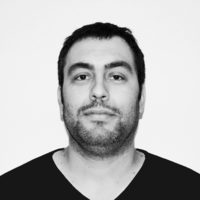}};
\draw (-2.15,2.45) node[circle, minimum size = 0.05cm, fill=yellow!20] {\footnotesize$ A $};
\draw (-0.15,2.45) node[circle, minimum size = 0.05cm, fill=yellow!20] {\footnotesize$ B $};
\draw (1.85,2.45) node[circle, minimum size = 0.05cm, fill=yellow!20] {\footnotesize$ C $};
\draw (3.85,2.45) node[circle, minimum size = 0.05cm, fill=yellow!20] {\footnotesize$ D $};
 \draw (-2.1,-0.25) node[circle, minimum size = 0.05cm, fill=yellow!20] {\footnotesize$ A $};
 \draw (-0.2,-0.25) node[circle, minimum size = 0.05cm, fill=green!20] {\footnotesize$ 1 $};
 \draw (1.7,-0.25) node[circle, minimum size = 0.05cm, fill=green!20] {\footnotesize$2 $};
 \draw (3.6,-0.25) node[circle, minimum size = 0.05cm, fill=yellow!20] {\footnotesize$B $};
 \draw (5.5,-0.25) node[circle, minimum size = 0.05cm, fill=green!20] {\footnotesize$ 4 $};
 \draw (7.4,-0.25) node[circle, minimum size = 0.05cm, fill=yellow!20] {\footnotesize$C $};
 \draw (9.3,-0.25) node[circle, minimum size = 0.05cm, fill=green!20] {\footnotesize$6 $};
 \draw (11.2,-0.25) node[circle, minimum size = 0.05cm, fill=green!20] {\footnotesize$7 $};
 \draw (13.1,-0.25) node[circle, minimum size = 0.05cm, fill=yellow!20] {\footnotesize$ D $};
 
 \draw (-2.1,-2.25) node[circle, minimum size = 0.05cm, fill=yellow!20] {\footnotesize$ A $};
 \draw (-0.2,-2.25) node[circle, minimum size = 0.05cm, fill=blue!20] {\footnotesize$1 $};
 \draw (1.7,-2.25) node[circle, minimum size = 0.05cm, fill=blue!20] {\footnotesize$2 $};
 \draw (3.6,-2.25) node[circle, minimum size = 0.05cm, fill=blue!20] {\footnotesize$3 $};
 \draw (5.5,-2.25) node[circle, minimum size = 0.05cm, fill=blue!20] {\footnotesize $4$};
 \draw (7.4,-2.25) node[circle, minimum size = 0.05cm, fill=blue!20] {\footnotesize$5 $};
 \draw (9.3,-2.25) node[circle, minimum size = 0.05cm, fill=blue!20] {\footnotesize$6 $};
 \draw (11.2,-2.25) node[circle, minimum size = 0.05cm, fill=blue!20] {\footnotesize$7 $};
 \draw (13.1,-2.25) node[circle, minimum size = 0.05cm, fill=yellow!20] {\footnotesize$ D $};
\end{tikzpicture}
}
\caption{Given 4 portrait images as control points (A--D) a discrete B\'ezier curves in the metamorphosis manifold (marked in blue)
are compared with the geodesic interpolation (marked in green).
}
\label{fig:teaser}
\end{figure}

\section{Introduction}
B\'ezier curves are a classical tool in computer aided geometric design. 
We generalize this tool to an infinite-dimensional manifold of images. 
To this end we consider images as control points and compute a curve in the space of images by a Riemannian version of de Casteljau's algorithm.
Thus, our approach relies on the definition of a Riemannian structure and the resulting notion of geodesic curves on the space of images.

The study of the space of images from a Riemannian manifold perspective allows to transfer various tools  
from classical geometry and computer-aided design to this infinite-dimensional space. \Eg, geodesics as minimizers of the path length
generalize the notion of straight lines from Euclidian space, the geodesic distance allows to measure dissimilarity of images in a rigorous way,
the geometric exponential map allows to generate natural extrapolation paths starting from an infinitesimal variation of an image.
During the past decade, this geometric approach triggered the development of new methods in 
computer vision and imaging, ranging from shape statistics \cite{FlLuPi04} to computational anatomy \cite{BeMiTrYo02}. 

Different image manifolds have been investigated. The concept of optimal transport was used to study the space of images, where image intensity functions are considered as probability measures \cite{BeBr00,ZhYaHa07}.
Following the classical paradigm by Arnold \cite{Ar66a,ArKh98}, the temporal change of images can be studied based on the flow of diffeomorphism concept, where  a family of diffeomorphisms $(\psi(t))_{t\in [0,1]}: \bar D \to \R^d$ on $\bar \compdom\subset\R^d$ describes a flow transporting image intensities along particle paths. The metamorphosis approach was first proposed by Miller and Younes \cite{MiYo01} as a generalization of the flow of diffeomorphism model and comprehensively analyzed by Trouv\'e and Younes \cite{TrYo05}. Besides pure transport it allows in addition for image intensity variations along motion paths. 

On any Riemannian image manifold geodesics are not only minimizers of the length functional but also 
of the path energy, which is the time integral of the squared path velocity. In addition, minimizers of the path energy 
have constant speed parameterization. Hence, these geodesics are not only the obvious generalization of straight lines in Euclidian space,
but also allow a simple procedure to compute convex combinations with convex coefficients $t$ and $1-t$: one simply evaluates the path in image space at time $t\ell$, where $\ell$ is the total path length.
Convex combination is the key ingredient of de Casteljau's algorithm. This is our starting point, and in 
this paper we will focus on the metamorphosis model and use the above insight to study B\'ezier curves in this space. 

Already in 1995, Park and Ravani realized how de Casteljau's classical algorithm can be applied on Riemannian manifolds
(just replacing straight lines by geodesics) to define smooth B\'ezier curves,
and they applied this concept to generate smooth trajectories for moving objects on a manifold of kinematically admissible motions \cite{PaRa95}.
Such B\'ezier curve segments on manifolds can also be patched together to yield a spline interpolation of points on the manifold.
To this end, Popiel and Noakes derived formulae for the endpoint velocities and (covariant) accelerations of B\'ezier curves in terms of the B\'ezier control points \cite{PoNo07},
which can be employed to ensure smoothness of up to two derivatives.
In \cite{MoCaVe08}, a concept of geodesics from discrete differential geometry is used to apply de Casteljau's algorithm for the purpose of curve modeling on triangulated surfaces.
$C^1$ smoothness of B\'ezier splines is here ensured by extrapolating the endpoint velocity of a B\'ezier patch
via so-called straightest geodesics to obtain control points of the next patch.
The authors also implement a classical control point refinement procedure to approximate B\'ezier curves via a subdivision scheme.
Gousenbourger et al.\ compute interpolating $C^1$ splines on the sphere, $SO(3)$, and a shape manifold of closed curves \cite{GoSaAb14},
where they generalize the idea of cubic B\'ezier splines with minimum acceleration to manifolds.
Let us finally mention that there are also alternative generalizations of polynomial curves to manifolds based on variational definitions.
In particular, generalized cubic curves can be defined as minimizers of the integrated squared second covariant derivative \cite{CaSiCr01,PoNo07a}.

In this paper we discuss B\'ezier curves in the space of images considered as a Riemannian manifold using the metamorphosis approach.
Figure \ref{fig:teaser} shows the comparison of piecewise geodesic interpolation and a discrete cubic B\'ezier curve for four different facial images as control points---from now on called control images. The B\'ezier curve is temporally smooth, comes close to the two intermediate images, and features of the control images globally pervade the curve.

\section{Time Discrete Metamorphosis}
In this section, we briefly review the metamorphosis model and introduce a suitable variational time discretization. With the notion of continuous and discrete path energy at hand we can define a continuous and discrete geodesic interpolation.

\subsection{Continuous Model}
The Riemannian metric in the metamorphosis model combines a measurement of friction 
caused by the flow of the image intensities and a measure of the variation of intensity values along motion paths. More explicitly, along a path $u: [0,1] \to L^2(D)$ in the manifold of images on a bounded Lipschitz domain $D$ we consider the Riemannian metric
\beqn
g_{u(t)}(\dot u(t),\dot u(t)) = \min_{v} \int_D L[v(t),v(t)] + \frac1\delta \left(\frac{D}{\partial t} u(t)\right)^2 \d x\,,
\eeqn
where $\dot u(t)$ represents the velocity along the path (and is just the pointwise time derivative of the image $u(t)$).
The first term describes the viscous dissipation in a multipolar fluid model (\cf\ \cite{NeSi91}).
Here, we assume that velocities field vanish on $\partial D$ and consider 
$L[v,v] :=  \tfrac{\lambda}{2} (\tr\varepsilon[v])^2+ \mu\tr(\varepsilon[v]^2) + \gamma |D^m v|^2$ 
as the classical viscous dissipation model for a Newtonian flow plus a simple multipolar dissipation model, where $\varepsilon[v]= \frac12(\nabla v +  \nabla v^T)$, $m>1+\frac{d}{2}$ (for $d$ the space dimension) and $\lambda, \, \mu,\, \gamma >0$.
The second term with weight $\frac1\delta>0$ measures the temporal variation of the intensity in terms of the material derivative 
$\frac{D}{\partial t} u = \dot u + \nabla u \cdot v$. Obviously, the same temporal change $\dot u(t)$ in the image intensity can be produced by different motion fields $v(t)$ and associated material derivatives $\frac{D}{\partial t} u$, which makes the minimization with respect to $v$ in the definition of the metric necessary.
The path energy is given by
\beqn\label{eq:DefinitionPathenergy}
\Econt[(u(t))_{t\in [0,1]}] = \int_0^1 g_{u(t)}(\dot u(t),\dot u(t)) \d t\,.
\eeqn
Dupuis \etal\ \cite{DuGrMi98} showed already for the pure transport model ($\delta=\infty$) that paths 
of finite energy are indeed one-parameter families of diffeomorphisms. 
A rigorous analytical treatment of the general model can be found in \cite{TrYo05a} including the existence of minimizing paths connecting two images in $L^2(D)$.
It relies on an appropriate notion of tangent vectors, which are equivalence classes of pairs $(v,z)$, where $v$ is a motion field with square integrable Jacobian, 
and $z$ is a weak representation of the material derivative $\frac{D}{\partial t} u$.
Let $(u(t))_{t\in [0,1]}$ denote a continuous geodesic curve in the space of images given as the minimizer of the 
path energy for given images $u(0) = u_A$ and $u(1)=u_B$,
then we can define a convex combination of the images $u_A$ and $u_B$ with weight $\lambda$
via an interpolation of the geodesic at time $t=\lambda$, \ie
\beqn
\interpol(u_A,u_B,\lambda)=u(\lambda)\,.
\eeqn
Here, we in particular make use of the fact that the ``speed'' $\sqrt{g_{u(t)}(\dot u(t),\dot u(t))}$ is constant for a minimizer of the path energy.

\subsection{Time Discrete Model}
To robustly and efficiently approximate geodesic curves in the metamorphosis model 
we define a suitable time discrete version of the continuous 
path energy \eqref{eq:DefinitionPathenergy}. To this end, we consider $(K+1)$-tuples 
$(u_0,\ldots, u_K)$ of images and define on them an energy
\beqn
\Ediscr[(u_0,\ldots, u_K)] = K \sum_{k=1}^K \WEner[u_{k-1},u_k]\,,
\eeqn
where $\WEner[u_{k-1},u_k]$ is a classical matching functional measuring the cost to match the image 
$u_{k-1}$ with the image $u_k$. In detail,
\begin{equation}
\WEner[u,\tilde u]= \min_{\phi} \int_{D} W(D \phi) + \gamma |D^m \phi|^2+
\frac{1}{\delta} |\tilde u \circ \phi - u|^2 \d x
\label{eq:WEnerDefinition}
\end{equation}
for an isotropic and rigid body motion invariant energy density $W$. A suitable choice in the case $d=2$ is given in \cite{TimeDiscrete}.
In our numerical computations we will use the simplified energy $W(D\phi)= |D \phi|^2$ and we will replace $\gamma |D^m \phi|^2$ by $\gamma |\triangle \phi|^2$ corresponding to the quadratic form $L[v(t),v(t)] = Dv:Dv + \gamma \triangle v \cdot \triangle v$.
Minimization is performed over a set of admissible deformations with $\phi(x) = x$ on $\partial D$. 
A \emph{discrete geodesic path} is defined as a minimizer of the 
discrete path energy $\Ediscr$ for $u_0 = u_A$ and $u_K = u_B$.
In this discrete path energy, two opposing effects can be observed. The last term in \eqref{eq:WEnerDefinition} penalizes intensity variations along the discrete motion path $(x, \phi_1(x), (\phi_2\circ \phi_1)(x), \ldots, (\phi_K \circ \ldots \circ \phi_1)(x))$, 
whereas the first two terms penalize deviations of the (discrete) flow along these discrete motion paths 
from rigid body motions. In fact,  $K (\image_k \circ \phi_k-\image _{k-1})$  plays the role of a time discrete material derivative along the above discrete motion path.
This ansatz ensures that the discrete energy $\Ediscr$ 
$\Gamma$--converges for $K\to\infty$ to the continuous energy $\Econt$. For the proof we refer to  \cite{TimeDiscrete}.
In particular, discrete geodesics converge to continuous geodesics for $K\to\infty$.
An introduction to variational time discretization on shape manifolds can be found in \cite{RuWi12b}. 
Obviously, the energy scales quadratically in the deformation $\phi$, 
which itself is expected to scale linearly in the time step $\tau =\frac1K$. This motivates the coefficient $K$ in front of the discrete path energy. 
Now, assuming uniqueness of discrete geodesics a discrete geodesic interpolation in analogy to the continuous interpolation is defined as
\beqn
\Interpol^K(u_A,u_B,k)=u_k
\eeqn
for the discrete geodesic path $(u_0,\ldots, u_K)$.

\section{De Casteljau's Algorithm on the Manifold of Images}
A  B\'ezier curve is a polynomial curve of degree $n>1$ in Euclidean space $\R^d$
that is defined by $n+1$ control points $x_0,\ldots,x_n\in\R^d$.
The curve emanates from the first and ends in the last control point,
but does not interpolate the intermediate control points.
However, it roughly follows those points and always remains within their convex hull.
From another viewpoint, the choice of the $n+1$ control points
is equivalent to specifying the first $\frac{n+1}2$ derivatives at the curve end points.
The classical de Casteljau algorithm for evaluating points on a B\'ezier curve
can be used to generalize those curves from $\R^n$ to Riemannian manifolds \cite{PaRa95}.

\subsection{Continuous B\'ezier curves}
De Casteljau's algorithm constructs each point along a B\'ezier curve recursively
via an iteration of weighted interpolations between point pairs,
starting from weighted interpolations between all neighboring control points.
Since weighted interpolation between points can also be performed on Riemannian manifolds,
the algorithm can directly be applied to control points on a manifold, yielding a generalisation of B\'ezier curves.
Note that on a Riemannian manifold, weighted interpolation between two points yields a point along the connecting geodesic
as opposed to a point along the connecting straight line in Euclidean space.

In our case, the control points are images $u_0^0,\ldots,u_n^0:D\to\R$,
and the corresponding B\'ezier curve\notinclude{ $\Bezier(u_0^0,\ldots,u_n^0,\cdot):[0,1]\to\manifold$,} $t\mapsto \Bezier(u_0^0,\ldots,u_n^0,t)$
at any time $t\in [0,1]$ is recursively defined via de Casteljau's algorithm as 
$$\Bezier(u_i,\ldots,u_j,t) = \interpol(\Bezier(u_i,\ldots,u_{j-1},t), \Bezier(u_{i+1},\ldots,u_{j},t), t)\,.$$ 
Geodesics are not unique in general, hence the interpolations and the resulting B\'ezier curve for a given set of input images are in general non-unique as well.
We obtain the following existence and continuity result.
\begin{theorem}[Existence and stability of B{\'e}zier curves]\label{thm1}
Let $D$ be a Lipschitz domain and $n\geq1$.
For any $n+1$ input images $u_0^0,\ldots,u_n^0 \in L^2(D)$ there exists a B{\'e}zier curve $\Bezier(u_0^0,\ldots,u_n^0,\cdot):[0,1]\to L^2(D)$.
Furthermore, if $(u_{0,k}^0,\ldots,u_{n,k}^0)_{k=1,2,\ldots}$ is a sequence of control points converging against $(u_0^0,\ldots,u_n^0)$ in $(L^2(D))^{n+1}$,
then there exist B\'ezier curves $\Bezier(u_{0,k}^0,\ldots,u_{n,k}^0,\cdot)$ of which a subsequence converges pointwise against a B\'ezier curve $\Bezier(u_0^0,\ldots,u_n^0,\cdot)$.
\end{theorem}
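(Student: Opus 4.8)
The plan is to reduce the entire statement to a single stability property of the two-point geodesic interpolation $\interpol$ and to propagate it through the $n$ levels of the de Casteljau recursion. Existence of a minimizing geodesic between any two images in $L^2(D)$, and hence of $\interpol(u_A,u_B,t)=u(t)$, is guaranteed by the analysis of Trouv\'e and Younes \cite{TrYo05a}; because such a geodesic again takes values in $L^2(D)$, existence of the B\'ezier curve follows by induction over the recursion depth. Indeed, the level-$m$ outputs $\Bezier(u_i,\ldots,u_{i+m},t)$ lie in $L^2(D)$ and therefore form admissible inputs for level $m+1$, so after $n$ levels $\Bezier(u_0^0,\ldots,u_n^0,t)$ is defined for every $t$, and any selection of minimizers produces a B\'ezier curve.

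The backbone is a stability lemma for $\interpol$: whenever $u_A^k\to u_A$ and $u_B^k\to u_B$ in $L^2(D)$, a subsequence of the associated minimizing geodesics converges, so in particular $\interpol(u_A^k,u_B^k,t)\to\interpol(u_A,u_B,t)$ for some limiting geodesic. The first step is a uniform energy bound. The pure-intensity path $s\mapsto(1-s)u_A^k+su_A$ with vanishing velocity field has material derivative $u_A-u_A^k$ and vanishing dissipation, hence path energy $\frac{1}{\delta}\|u_A-u_A^k\|_{L^2(D)}^2\to0$; concatenating such paths at both ends with a fixed geodesic from $u_A$ to $u_B$ and using the triangle inequality for $\dist$ yields $\dist(u_A^k,u_B^k)\to\dist(u_A,u_B)$, so the energies $\Econt[u^k]$ stay bounded.

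I would then run the direct method with moving boundary data. A uniform energy bound controls the velocity fields in the Sobolev norm of order $m$, so the embedding $m>1+\tfrac{d}{2}$ forces equicontinuity of the generated flows, while the material-derivative term controls $\int_0^1\|\tfrac{D}{\partial t}u^k\|_{L^2(D)}^2\d t$; together they give equicontinuity in time of the images into $L^2(D)$, and an Arzel\`a--Ascoli argument combined with weak compactness extracts a subsequence of whole geodesics converging as curves, uniformly in the evaluation parameter. Lower semicontinuity of $\Econt$ shows the limit $u$ joins $u_A$ to $u_B$ with $\Econt[u]\le\liminf\Econt[u^k]=\dist^2(u_A,u_B)$, and since $u$ already connects these endpoints the reverse inequality holds, so $u$ is a minimizing geodesic and $u^k(t)\to u(t)$. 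One source of technical work here is to ensure that the endpoint constraint survives the limit and that the convergence is strong enough to evaluate the curves at each time.

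Finally I would feed the lemma into the recursion. At level one the output curves are the minimizing geodesics themselves, so the lemma already yields convergence uniform in $t$ along a subsequence. At higher levels the output $t\mapsto\interpol(A^k(t),B^k(t),t)$ interpolates between two converging curves, and turning the per-time convergence supplied by the lemma into convergence along a \emph{single} subsequence valid for all $t$---as pointwise convergence of the composite curves demands---is where I expect the main difficulty. The dependence on the evaluation parameter is harmless, being Lipschitz with constant $\dist(A^k(t),B^k(t))$ by the constant-speed property; the obstruction is the dependence on the moving endpoints, for which the absence of curvature bounds precludes any quantitative stability of $\interpol$ and hence any equicontinuity read off from metric convexity. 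I would attempt to recover compactness of the composite curves either by a diagonal extraction over a countable dense set of times combined with a uniform-in-$t$ bound on these curves, or by applying the curve-level compactness of the lemma directly to the composite family; securing the requisite uniform bound is the crux. Since there are only finitely many levels, finitely many successive extractions then deliver one subsequence along which $\Bezier(u_{0,k}^0,\ldots,u_{n,k}^0,t)\to\Bezier(u_0^0,\ldots,u_n^0,t)$ for every $t$, with each limit identified as a genuine B\'ezier point by the lemma.
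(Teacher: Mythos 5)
Your overall architecture---reduce everything to a stability lemma for the two-point interpolation $\interpol$ and propagate it through finitely many levels of the recursion---matches the paper's, and your derivation of the uniform energy bound via zero-velocity pure-intensity paths is a clean, self-contained way to obtain what the paper subsumes under ``equicoerciveness''. However, your compactness argument runs on the wrong objects, and this is where a genuine gap opens. You propose Arzel\`a--Ascoli directly on the image curves $t\mapsto u^k(t)$ in $L^2(D)$, asserting that the bounds on $\int_0^1\|v^k\|_{H^m}^2\d t$ and $\int_0^1\|\tfrac{D}{\partial t}u^k\|_{L^2}^2\d t$ ``together give equicontinuity in time of the images into $L^2(D)$''. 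They do not: decomposing $u^k(t)$ into the initial image transported by the flow plus the accumulated material derivative, the transported part contributes terms of the form $\|u^k(0)\circ\psi^k(t)^{-1}-u^k(0)\circ\psi^k(s)^{-1}\|_{L^2}$, and continuity of composition (like continuity of translation) in $L^2$ admits no modulus that is uniform over merely $L^2$-bounded sets of images. So equicontinuity cannot be read off from the energy alone; it requires the flow representation together with the precompactness of the endpoint data.

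The paper sidesteps this entirely by placing the compactness on the diffeomorphism families $\psi^k$ --- uniformly bounded with their inverses in $C^{0,\frac12}([0,1],C^{1,\alpha}(\bar D))$, hence strongly convergent in $C^{0,\beta}([0,1],C^{1,\alpha}(\bar D))$ along a subsequence --- and then exploiting the closed-form expression for the optimal image path given $\psi$ and the endpoints,
\begin{equation*}
u_{\psi,u(0),u(1)}(t) = \Bigl[u(0)+(u(1)\circ\psi(1)-u(0))\tfrac{\int_0^t(\det D\psi)^{-1}(s)\d s}{\int_0^1(\det D\psi)^{-1}(s)\d s}\Bigr]\circ\psi(t)^{-1}.
\end{equation*}
The resulting estimate $\|\tilde u_k\circ\phi_k-\tilde u\circ\phi\|_{L^2}\leq\|\det D\phi_k^{-1}\|_{L^\infty}\|\tilde u_k-\tilde u\|_{L^2}+\|\tilde u\circ\phi_k-\tilde u\circ\phi\|_{L^2}$ needs composition continuity only for the \emph{fixed} limit image, which is exactly where the non-uniformity is neutralized. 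This representation formula is the missing device in your proposal: it simultaneously settles the well-definedness of evaluating a geodesic at time $t$ (which you take for granted), replaces your Arzel\`a--Ascoli step, and yields convergence at \emph{every} $t$ along a \emph{single} subsequence --- precisely the property you correctly identify as the crux for levels $\geq 2$ of the recursion but leave unresolved. Your flagging of the $t$-dependent endpoints at higher levels is legitimate (the paper itself is terse on this point), but without importing the representation formula or an equivalent substitute your proof cannot be completed.
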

\begin{remark}
In the case that all involved geodesics and thus all B\'ezier curves are unique, we even have convergence of the full sequence.
\end{remark}
\begin{proof}
The result automatically follows from the recursive definition of B\'ezier curves
if we can show that for every $t\in[0,1]$ the interpolation $(u_1,u_2)\mapsto\interpol(u_1,u_2,t)$ exists and
is a well-defined, continuous map from $(L^2(D))^2$ to $L^2(D)$. To keep the exposition compact, we 
argue here formally and do not expand the arguments for the proper weak notion of the material derivative.

Trouv{\'e} and Younes \cite[Theorem 6]{TrYo05a} have shown the existence of a geodesic between any $u_1,u_2\in L^2(D)$,
so for the well-definedness of $\interpol(u_1,u_2,t)$ we only need to show the well-definedness of evaluating such a geodesic at any time $t\in[0,1]$.
From \cite{TrYo05a} we know that any geodesic $u(t)_{t\in [0,1]}$ is associated
with a family $\psi(t)_{t\in [0,1]}$ of diffeomorphisms and the generating motion field $v_\psi(t)=\dot\psi(t)\circ\psi(t)^{-1}$, and $u(t)_{t\in [0,1]}$ and $\psi(t)_{t\in [0,1]}$
minimize the energy
\begin{equation*}
\tilde\Econt[\psi(t)_{t\in [0,1]},u(t)_{t\in [0,1]}]
=\int_0^1\int_DL[v_\psi(t),v_\psi(t)]+\tfrac1\delta(\dot u(t)+\nabla u(t)\cdot v_\psi(t))^2\,\d x\,\d t\,,
\end{equation*}
\ie, $\Econt[u(t)_{t\in [0,1]}]=\min_{\psi(t)_{t\in [0,1]}}\tilde\Econt[\psi(t)_{t\in [0,1]},u(t)_{t\in [0,1]}]$.
As in \cite{TimeDiscrete}, for given $u(0)$, $u(1)$, and $\psi(t)_{t\in [0,1]}$,
from \cite[Theorem 4]{TrYo05a} and \cite[Theorem 2]{TrYo05a} one obtains an explicit representation of the minimizer $u(t) = u_{\psi,u(0),u(1)}(t)$
of $\tilde\Econt[\psi(t)_{t\in [0,1]},\cdot]$ with
\begin{equation*}
u_{\psi,u(0),u(1)}(t) := \left[u(0)  + (u(1) \circ \psi(1) - u(0))\frac{\int_0^t (\det D \psi)^{-1}(s) \d s}{\int_0^1 (\det D \psi)^{-1}(s) \d s}\right]\circ\psi(t)^{-1}\,.
\end{equation*}
Thus, taking $u(0)=u_1$, $u(1)=u_2$, and $\psi$ the diffeomorphism family associated with the geodesic,
the evaluation of the geodesic at $t$ is given by $u_{\psi,u_1,u_2}(t)\in L^2(D)$.

As for the continuity of $(u_1,u_2)\mapsto\interpol(u_1,u_2,t)$,
consider a sequence $(u_{1,k},u_{2,k})$ converging to $(u_1,u_2)$ in $(L^2(D))^2$.
Let us express the path energy in terms of the diffeomorphisms according to
\begin{align*}
\psi(t)_{t\in [0,1]}\mapsto\tilde\Econt^k[\psi(t)_{t\in [0,1]}]&:=\tilde\Econt[\psi(t)_{t\in [0,1]},u_{\psi,u_{1,k},u_{2,k}}(t)_{t\in [0,1]}]\,,\\
\psi(t)_{t\in [0,1]}\mapsto\tilde\Econt^\infty[\psi(t)_{t\in [0,1]}]&:=\tilde\Econt[\psi(t)_{t\in [0,1]},u_{\psi,u_{1},u_{2}}(t)_{t\in [0,1]}]\,.
\end{align*}
It is relatively straightforward to show that minimizers $(\psi_k(t))_{t\in [0,1]}$ of $\tilde\Econt^k$ converge against minimizers $(\psi_\infty(t))_{t\in [0,1]}$ of $\tilde\Econt^\infty$,
arguing by $\Gamma$-convergence of $\tilde\Econt^k$ to $\tilde\Econt^\infty$ and equicoerciveness of the $\tilde\Econt^k$.
Note that the paths $u_{\psi_k,u_{1,k},u_{2,k}}$ and  $u_{\psi_\infty,u_1,u_2}$ of images are geodesics.
As shown in \cite{TrYo05a}, the $\psi_k(t)_{t\in [0,1]}$ and their inverses are even uniformly bounded in $C^{0,\frac12}([0,1], C^{1,\alpha}(\bar D))$ for every $\alpha < m-1-\tfrac{d}{2}$
and thus admit strongly convergent subsequences in $C^{0,\beta}([0,1],C^{1,\alpha}(\bar D))$ for every $0<\beta<\tfrac12$ and $\alpha < m-1-\tfrac{d}{2}$ (which we again index by $k$ for simplicity).
This strong convergence of the $\psi_k$ and their inverses now implies the desired convergence $u_{\psi_k,u_{1,k},u_{2,k}}(t)\to u_{\psi_\infty,u_1,u_2}(t)=\interpol(u_1,u_2,t)$ in $L^2(D)$.
Indeed, this follows from the definition of the $u_{\psi_k,u_{1,k},u_{2,k}}$
since for any $\tilde u_k\to\tilde u$ in $L^2(D)$ and $\phi_k\to\phi$, $\phi_k^{-1}\to\phi^{-1}$ in $C^{1,\alpha}(\bar D)$ we have
$\|\tilde u_k\circ\phi_k-\tilde u\circ\phi\|_{L^2}
\leq\|\det D\phi_k^{-1}\|_{L^\infty}\|\tilde u_k-\tilde u\|_{L^2}+\|\tilde u\circ\phi_k-\tilde u\circ\phi\|_{L^2}$.
The second integral converges to $0$ because of the convergence of $\phi_k$ in $C^{0,\beta}((0,1),C^{1,\alpha}(\bar D))$.
\hfill$\Box$
\end{proof}

\subsection{Discrete B\'ezier curves}
In analogy to discrete geodesics, we define a discrete B\'ezier curve $$(\BEzier^K(u_0^0,\ldots,u_n^0,k))_{k=0,\ldots, K}$$ by replacing the continuous operations in the defining algorithm by discrete ones.
In explicit, a discrete $K$-B\'ezier curve of degree $n$ is a discrete path of images $(u_0,\ldots,u_K)$ defined by $n+1$ control points $u_0^0,\ldots,u_n^0$.
For the evaluation we replace the continuous geodesic interpolation $\interpol(u,\tilde u,t)$ by the discrete geodesic interpolation $\Interpol^K(u,\tilde u,k)$ with $t=\tfrac{k}{K}$ and obtain the recursive relation
$$\BEzier^K(u_i,\ldots,u_j,k) = \Interpol^K(\BEzier^K(u_i,\ldots,u_{j-1},k), \BEzier^K(u_{i+1},\ldots,u_{j},k), k)\,.$$  
For the actual computation we use the hierarchical algorithm (cf. Fig. \ref{alg:deCasteljau}). 

\begin{figure}
\begin{minipage}{0.48\linewidth}
\hspace*{5ex} \textbf{for} $j=1$ to $n$\\
\hspace*{7ex} \textbf{for} $i=j$ to $n$\\
\hspace*{9ex} $u^j_i=\Interpol^K(u^{j-1}_{i-1},u^{j-1}_i,k)$\\
\hspace*{7ex} \textbf{end}\\
\hspace*{5ex} \textbf{end}\\
\hspace*{5ex} $\BEzier^K(u_0^0,\ldots,u_n^0,k)=u_n^n$
\end{minipage}
\begin{minipage}{0.48\linewidth}
\resizebox{1.0\linewidth}{!}{
\begin{tikzpicture}[x=1cm, y=1cm]
\draw[-, very thick, black!50!green] (6,0.5) --(8.5,3) -- (11.25,3) -- (13.75,0.5);
  
\draw (6,0.5) coordinate (A);
\draw (8.5,3) coordinate (B);
\draw (11.25,3) coordinate (C);
\draw (13.75,0.5) coordinate (D);
\draw (6,0.5) [fill=black!50!green, black!50!green] circle [radius  = 0.1cm];;
\draw (8.5,3) [fill=black!50!green, black!50!green] circle [radius  = 0.1cm];;
\draw (11.25,3) [fill=black!50!green, black!50!green] circle [radius  = 0.1cm];;
\draw (13.75,0.5) [fill=black!50!green, black!50!green] circle [radius  = 0.1cm];;
\draw (89/14,6/7) [fill=black!50!green, black!50!green] circle [radius  = 0.1cm];;
\draw (94/14,17/14) [fill=black!50!green, black!50!green] circle [radius  = 0.1cm];;
\draw (99/14,11/7) [fill=black!50!green, black!50!green] circle [radius  = 0.1cm];;
\draw (104/14,27/14) [fill=black!50!green, black!50!green] circle [radius  = 0.1cm];;
\draw (109/14,16/7) [fill=black!50!green, black!50!green] circle [radius  = 0.1cm];;
\draw (114/14,37/14) [fill=black!50!green, black!50!green] circle [radius  = 0.1cm];;
\draw (249/28,3) [fill=black!50!green, black!50!green] circle [radius  = 0.1cm];;
\draw (260/28,3) [fill=black!50!green, black!50!green] circle [radius  = 0.1cm];;
\draw (271/28,3) [fill=black!50!green, black!50!green] circle [radius  = 0.1cm];;
\draw (282/28,3) [fill=black!50!green, black!50!green] circle [radius  = 0.1cm];;
\draw (293/28,3) [fill=black!50!green, black!50!green] circle [radius  = 0.1cm];;
\draw (304/28,3) [fill=black!50!green, black!50!green] circle [radius  = 0.1cm];;
\draw (325/28,37/14) [fill=black!50!green, black!50!green] circle [radius  = 0.1cm];;
\draw (335/28,16/7) [fill=black!50!green, black!50!green] circle [radius  = 0.1cm];;
\draw (345/28,27/14) [fill=black!50!green, black!50!green] circle [radius  = 0.1cm];;
\draw (355/28,11/7) [fill=black!50!green, black!50!green] circle [radius  = 0.1cm];;
\draw (365/28,17/14) [fill=black!50!green, black!50!green] circle [radius  = 0.1cm];;
\draw (375/28,6/7) [fill=black!50!green, black!50!green] circle [radius  = 0.1cm];;
\draw[-, very thick, orange] (104/14,27/14) --(282/28,3) -- (355/28,11/7);
\draw (104/14,27/14) [fill=orange!20, orange] circle [radius  = 0.1cm];;
\draw (282/28,3) [fill=orange!20, orange] circle [radius  = 0.1cm];;
\draw (355/28,11/7) [fill=orange!20, orange] circle [radius  = 0.1cm];;
\draw (765/98,102/49) [fill=orange!20, orange] circle [radius  = 0.1cm];;
\draw (401/49,219/98) [fill=orange!20, orange] circle [radius  = 0.1cm];;
\draw (839/98,117/49) [fill=orange!20, orange] circle [radius  = 0.1cm];;
\draw (876/98,249/98) [fill=orange!20, orange] circle [radius  = 0.1cm];;
\draw (913/98,132/49) [fill=orange!20, orange] circle [radius  = 0.1cm];;
\draw (950/98,279/98) [fill=orange!20, orange] circle [radius  = 0.1cm];;
\draw (2047/196,137/49) [fill=orange!20, orange] circle [radius  = 0.1cm];;
\draw (2120/196,127/49) [fill=orange!20, orange] circle [radius  = 0.1cm];;
\draw (2193/196,117/49) [fill=orange!20, orange] circle [radius  = 0.1cm];;
\draw (2266/196,107/49) [fill=orange!20, orange] circle [radius  = 0.1cm];;
\draw (2339/196,97/49) [fill=orange!20, orange] circle [radius  = 0.1cm];;
\draw (2412/196,87/49) [fill=orange!20, orange] circle [radius  = 0.1cm];;
\draw[-, very thick, blue] (876/98,249/98) --(2266/196,107/49);
\draw (876/98,249/98) [fill=blue!20, blue] circle [radius  = 0.1cm];;
\draw (2266/196,107/49) [fill=blue!20, blue] circle [radius  = 0.1cm];;
\draw (6389/686,122/49) [fill=blue!20, blue] circle [radius  = 0.1cm];;
\draw (6646/686,239/98) [fill=blue!20, blue] circle [radius  = 0.1cm];;
\draw (6903/686,117/49) [fill=blue!20, blue] circle [radius  = 0.1cm];;
\draw (7160/686,229/98) [fill=blue!20, blue] circle [radius  = 0.1cm];;
\draw (7417/686,16/7) [fill=blue!20, blue] circle [radius  = 0.1cm];;
\draw (7674/686,219/98) [fill=blue!20, blue] circle [radius  = 0.1cm];;
\end{tikzpicture}
}
\end{minipage}
\caption{The Discrete de Casteljau algorithm with a schematic sketch for $n=3$, $K=7$, and $k=4$ ($j=1$ green, $j=2$ orange, $j=3$ blue).}
\label{alg:deCasteljau}
\end{figure}
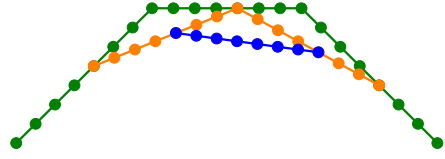
If we restrict ourselves to the space of images in $L^2(D)$ which are continuous up to a null set, we also obtain in the discrete context 
an existence and continuity result. 
\begin{theorem}[Existence and stability of discrete B{\'e}zier curves]\label{thm2}
Let $D$ be a Lipschitz domain and $n,K\geq1$.
For any $n+1$ input images $u_0^0,\ldots,u_n^0 \in L^2(D)$ there exists a discrete B{\'e}zier curve $\BEzier^K(u_0^0,\ldots,u_n^0,k)\in L^2(D)$, $k=0,\ldots,K$.
Furthermore, if $(u_{0,j}^0,\ldots,u_{n,j}^0)_{j=1,2,\ldots}$ is a sequence of control points converging against $(u_0^0,\ldots,u_n^0)$ in $(L^2(D))^{n+1}$,
then there exist B\'ezier curves $\BEzier^K(u_{0,j}^0,\ldots,u_{n,j}^0,\cdot)$ of which a subsequence converges pointwise against a B\'ezier curve $\BEzier^K(u_0^0,\ldots,u_n^0,\cdot)$.
\end{theorem}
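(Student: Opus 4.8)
The plan is to mirror the proof of Theorem~\ref{thm1}: by the recursive de Casteljau definition of $\BEzier^K$ (cf.\ Fig.~\ref{alg:deCasteljau}), the whole statement follows once we establish that for every $k\in\{0,\ldots,K\}$ the discrete interpolation $(u,\tilde u)\mapsto\Interpol^K(u,\tilde u,k)$ is well defined as a map $(L^2(D))^2\to L^2(D)$ and is stable, in the sense that strong $L^2$ convergence of the two arguments forces (subsequential) strong $L^2$ convergence of the value. Composing such stable maps along the hierarchical scheme then yields the claimed subsequential pointwise convergence of the full curves. Unlike in the continuous case, no closed-form representation of the minimizer is available, so I would obtain both properties directly from the variational structure of $\Ediscr$ via the direct method and a $\Gamma$-convergence argument.

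First I would prove existence of a discrete geodesic, i.e.\ a minimizer of $\Ediscr[(u_0,\ldots,u_K)]$ subject to $u_0=u$ and $u_K=\tilde u$, which then defines $\Interpol^K(u,\tilde u,k)=u_k$. Along a minimizing sequence the regularizing term $\gamma\int_D|D^m\phi_k|^2\d x$ bounds each deformation in $H^m(D)$; since $m>1+\tfrac d2$, Sobolev embedding yields strong compactness in $C^{1,\alpha}(\bar D)$, and the structural properties of $W$ guarantee that the limits are again admissible diffeomorphisms. The fidelity terms $\tfrac1\delta\|u_k\circ\phi_k-u_{k-1}\|_{L^2}^2$ bound the intermediate images in $L^2(D)$, which therefore converge weakly up to a subsequence. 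The one computation to check is the joint lower semicontinuity of $\WEner$: if $\phi^{(i)}\to\phi$ in $C^{1,\alpha}(\bar D)$ and $w^{(i)}\rightharpoonup w$ in $L^2(D)$, then a change of variables shows $w^{(i)}\circ\phi^{(i)}\rightharpoonup w\circ\phi$ in $L^2(D)$, so that weak lower semicontinuity of the $L^2$ norm together with weak $H^m$ lower semicontinuity of the regularizer gives $\liminf_i\WEner[\cdot,\cdot]\ge\WEner[\cdot,\cdot]$ at the limit. The direct method then produces the minimizer.

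For stability I would fix $k$, let $(u_{1,j},u_{2,j})\to(u_1,u_2)$ strongly in $(L^2(D))^2$, and study the fixed-endpoint functionals $\F_j[(u_1',\ldots,u_{K-1}')]:=\Ediscr[(u_{1,j},u_1',\ldots,u_{K-1}',u_{2,j})]$. A uniform energy bound follows from a fixed competitor, e.g.\ the $L^2$-affine path with identity deformations, yielding equicoercivity through the same $C^{1,\alpha}$/weak-$L^2$ compactness as above. I would then show $\F_j\xrightarrow{\Gamma}\F_\infty$: the $\liminf$ inequality is exactly the joint lower semicontinuity of each $\WEner$ summand established above, while the recovery sequence for the $\limsup$ inequality keeps the intermediate images fixed and moves only the endpoints, so that continuity of $\WEner$ under strong $L^2$ perturbation of its two arguments supplies the matching upper bound. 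Equicoercivity plus $\Gamma$-convergence then give subsequential convergence of minimizers of $\F_j$ to a minimizer of $\F_\infty$, i.e.\ $\Interpol^K(u_{1,j},u_{2,j},k)\rightharpoonup\Interpol^K(u_1,u_2,k)$, together with convergence of the minimal energies.

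Finally I would upgrade this to strong $L^2$ convergence, which is the delicate point and where the restriction to images continuous up to a null set enters. Convergence of the minimal energies forces no energy to be lost in any single summand, and since the deformation part of each $\WEner_k$ converges by the uniform $C^{1,\alpha}$ convergence of the optimal $\phi_k^j$, each fidelity residual converges in norm; combined with its weak convergence this gives strong $L^2$ convergence of $u_k^j\circ\phi_k^j-u_{k-1}^j$. Inducting on $k$, with base case $u_0^j=u_{1,j}\to u_1$ strong, and post-composing with $(\phi_k^j)^{-1}\to\phi_k^{-1}$ in $C^{1,\alpha}(\bar D)$ via the estimate $\|\tilde u_k\circ\psi_k-\tilde u\circ\psi\|_{L^2}\le\|\det D\psi_k^{-1}\|_{L^\infty}\|\tilde u_k-\tilde u\|_{L^2}+\|\tilde u\circ\psi_k-\tilde u\circ\psi\|_{L^2}$ from the proof of Theorem~\ref{thm1}, then yields $u_k^j\to u_k$ strongly in $L^2(D)$ for every $k$. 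I expect the main obstacle to be precisely this interaction between the merely weak convergence of the images and the composition with the converging diffeomorphisms, in particular guaranteeing that the limiting deformations stay non-degenerate so that the post-composition step is admissible, which is exactly what the continuity assumption on the image space is designed to control.
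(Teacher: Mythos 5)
Your proposal is sound and reaches the same conclusion, but it replaces the paper's key technical device for the stability step by a different one. The paper disposes of existence by citing the existence result for discrete geodesics in \cite{TimeDiscrete} (which you essentially re-derive via the direct method), and for continuity it follows the skeleton of Theorem~\ref{thm1} --- compactness of the deformations in $C^{1,\alpha}(\bar D)$ --- but then invokes \cite[Proposition~3.2]{TimeDiscrete}: for fixed deformations $\phi_1,\ldots,\phi_K$ and endpoints, the intermediate images $u_k$ are the \emph{unique solution of a block tridiagonal system of operator equations} whose entries depend continuously on the $\phi_k$. This is the exact discrete analogue of the explicit formula $u_{\psi,u(0),u(1)}(t)$ used in the continuous proof, and it yields strong $L^2$ continuity of $u_k$ in $(\phi_1,\ldots,\phi_K,u_0,u_K)$ in one stroke. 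You instead upgrade the weak convergence of minimizers to strong convergence by convergence of the minimal energies, splitting off each summand of $\Ediscr$, extracting norm convergence of the fidelity residuals $u_k^j\circ\phi_k^j-u_{k-1}^j$, and inducting over $k$ with post-composition by $(\phi_k^j)^{-1}$. That route works and is more self-contained (it does not need the linear-system representation), at the cost of a longer argument; the paper's route is shorter and gives continuity of the whole interpolation map directly rather than only along energy-convergent sequences of minimizers. One imprecision to fix: the deformation part of each summand does \emph{not} converge merely because $\phi_k^j\to\phi_k$ in $C^{1,\alpha}$, since the multipolar term $\gamma\int_D|D^m\phi_k^j|^2\d x$ is only weakly lower semicontinuous in $H^m$; you must run the same ``sum of lower semicontinuous terms whose total converges'' argument \emph{inside} each summand to conclude that the $H^m$ term and the fidelity term each converge to their limit values, and only then does the norm convergence of the residual follow.
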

\begin{proof}
The existence follows directly from the existence result \cite[Theorem 3.4]{TimeDiscrete} for discrete geodesics.
The proof of continuity follows the line of argumentation in the proof of Theorem\,\ref{thm1}, using compactness for the underlying family of deformations
and the fact that for two images $u_0,u_K$ and a discrete connecting geodesic with associated deformations $\phi_1,\ldots,\phi_K$,
the images $u_k=\Interpol^K(u_0,u_K,k)$ along the geodesic can be expressed as the unique solution of a block tridiagonal system of operator equations \cite[Proposition 3.2]{TimeDiscrete}
whose entries continuously depend on $\phi_1,\ldots,\phi_K$.
\hfill $\Box$
\end{proof}
Let us remark that the convergence of discrete B{\'e}zier curves for $K\to \infty$  to a continuous B{\'e}zier curve for the same set of input shapes is still open.
In fact, in \cite{TimeDiscrete} $\Gamma$--convergence of discrete geodesics is proven in the $L^2((0,1)\times D)$ topology, which is too weak 
to control the convergence of the interpolated images in the de Casteljau algorithm pointwise with respect to time.

To compute discrete geodesics numerically we have to introduce a suitable space discretization. To this end, we use a finite element discretization both for the family of images and for the family of deformations. For details we refer to \cite{TimeDiscrete}.

\begin{figure}[h]
\centering
\resizebox{1.0\linewidth}{!}{
\begin{tikzpicture}[x=1cm, y=1cm]
\draw[-, very thick] (6,0.5) --(8.5,3) -- (11.25,3) -- (13.75,0.5);
  
\draw (6,0.5) coordinate (A);
\draw (8.5,3) coordinate (B);
\draw (11.25,3) coordinate (C);
\draw (13.75,0.5) coordinate (D);
\draw [very thick](A) .. controls (B) and (C) .. (D);
\draw (6,0.5) node[circle, minimum size = 0.5cm, fill=yellow!20] {$ A $};
\draw (8.5,3) node[circle, minimum size = 0.5cm, fill=yellow!20] {$ B $};
\draw (11.25,3) node[circle, minimum size = 0.5cm, fill=yellow!20] {$ C $};
\draw (13.75,0.5) node[circle, minimum size = 0.5cm, fill=yellow!20] {$ D $};
 \draw (6.9482421875,1.3203125)[below] node[circle, minimum size = 0.05cm, fill=blue!20] {\small$ 1 $};
 \draw (7.9140625,1.90625)[below] node[circle, minimum size = 0.05cm, fill=blue!20] {\small$2 $};
 \draw (8.8916015625,2.2578125)[below] node[circle, minimum size = 0.05cm, fill=blue!20] {\small$3 $};
 \draw (9.875,2.375)[below] node[circle, minimum size = 0.05cm, fill=blue!20] {\small$4 $};
 \draw (10.8583984375,2.2578125)[below] node[circle, minimum size = 0.05cm, fill=blue!20] {\small$5 $};
 \draw (11.8359375,1.90625)[below] node[circle, minimum size = 0.05cm, fill=blue!20] {\small$6 $};
 \draw (12.8017578125,1.3203125)[below] node[circle, minimum size = 0.05cm, fill=blue!20] {\small$7 $};
  
 \draw (6.8333333,4/3)[above] node[circle, minimum size = 0.05cm, fill=green!20] {\small$1 $};
 \draw (23/3,2.16666666)[above] node[circle, minimum size = 0.05cm, fill=green!20] {\small$2 $};
 \draw (9.875,3)[above] node[circle, minimum size = 0.05cm, fill=green!20] {\small$4 $};
 \draw (12.08333333,2.16666666)[above] node[circle, minimum size = 0.05cm, fill=green!20] {\small$6 $};
 \draw (12.916666666,4/3)[above] node[circle, minimum size = 0.05cm, fill=green!20] {\small$7 $};
 
\draw (-1.45 , 1.75 ) node [align = center] {\includegraphics[width=1.9cm, keepaspectratio]{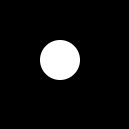}};
\draw (0.55, 1.75) node [align = center] {\includegraphics[width=1.9cm, keepaspectratio]{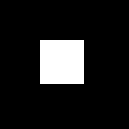}};
\draw (2.55, 1.75) node [align = center] {\includegraphics[width=1.9cm, keepaspectratio]{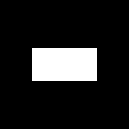}};
\draw (4.55, 1.75) node [align = center] {\includegraphics[width=1.9cm, keepaspectratio]{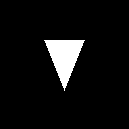}};
\draw (-1.45,-0.9) node [align = center] {\includegraphics[width=1.8cm, keepaspectratio]{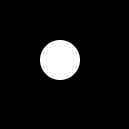}};
\draw (0.45,-0.9) node [align = center] {\includegraphics[width=1.8cm, keepaspectratio]{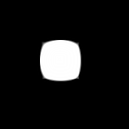}};
\draw (2.35,-0.9) node [align = center] {\includegraphics[width=1.8cm, keepaspectratio]{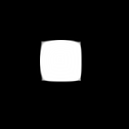}};
\draw (4.25,-0.9) node [align = center] {\includegraphics[width=1.8cm, keepaspectratio]{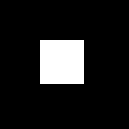}};
\draw (6.15,-0.9) node [align = center] {\includegraphics[width=1.8cm, keepaspectratio]{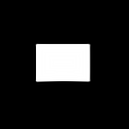}};
\draw (8.05,-0.9) node [align = center] {\includegraphics[width=1.8cm, keepaspectratio]{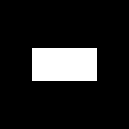}};
\draw (9.95,-0.9) node [align = center] {\includegraphics[width=1.8cm, keepaspectratio]{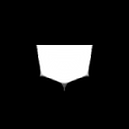}};
\draw (11.85,-0.9) node [align = center] {\includegraphics[width=1.8cm, keepaspectratio]{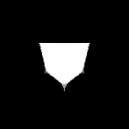}};
\draw (13.75,-0.9) node [align = center] {\includegraphics[width=1.8cm, keepaspectratio]{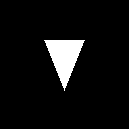}};
\draw (-1.45,-2.9) node [align = center] {\includegraphics[width=1.8cm, keepaspectratio]{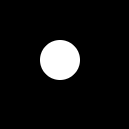}};
\draw (0.45,-2.9) node [align = center] {\includegraphics[width=1.8cm, keepaspectratio]{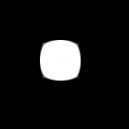}};
\draw (2.35,-2.9) node [align = center] {\includegraphics[width=1.8cm, keepaspectratio]{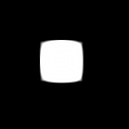}};
\draw (4.25,-2.9) node [align = center] {\includegraphics[width=1.8cm, keepaspectratio]{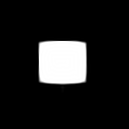}};
\draw (6.15,-2.9) node [align = center] {\includegraphics[width=1.8cm, keepaspectratio]{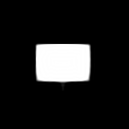}};
\draw (8.05,-2.9) node [align = center] {\includegraphics[width=1.8cm, keepaspectratio]{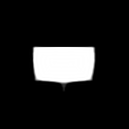}};
\draw (9.95,-2.9) node [align = center] {\includegraphics[width=1.8cm, keepaspectratio]{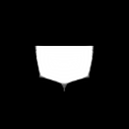}};
\draw (11.85,-2.9) node [align = center] {\includegraphics[width=1.8cm, keepaspectratio]{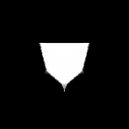}};
\draw (13.75,-2.9) node [align = center] {\includegraphics[width=1.8cm, keepaspectratio]{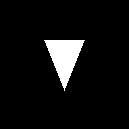}};
\draw (-2.15,2.45) node[circle, minimum size = 0.05cm, fill=yellow!20] {\footnotesize$ A $};
\draw (-0.15,2.45) node[circle, minimum size = 0.05cm, fill=yellow!20] {\footnotesize$ B $};
\draw (1.85,2.45) node[circle, minimum size = 0.05cm, fill=yellow!20] {\footnotesize$ C $};
\draw (3.85,2.45) node[circle, minimum size = 0.05cm, fill=yellow!20] {\footnotesize$ D $};
 \draw (-2.1,-0.25) node[circle, minimum size = 0.05cm, fill=yellow!20] {\footnotesize$ A $};
 \draw (-0.2,-0.25) node[circle, minimum size = 0.05cm, fill=green!20] {\footnotesize$ 1 $};
 \draw (1.7,-0.25) node[circle, minimum size = 0.05cm, fill=green!20] {\footnotesize$2 $};
 \draw (3.6,-0.25) node[circle, minimum size = 0.05cm, fill=yellow!20] {\footnotesize$B $};
 \draw (5.5,-0.25) node[circle, minimum size = 0.05cm, fill=green!20] {\footnotesize$ 4 $};
 \draw (7.4,-0.25) node[circle, minimum size = 0.05cm, fill=yellow!20] {\footnotesize$C $};
 \draw (9.3,-0.25) node[circle, minimum size = 0.05cm, fill=green!20] {\footnotesize$6 $};
 \draw (11.2,-0.25) node[circle, minimum size = 0.05cm, fill=green!20] {\footnotesize$7 $};
 \draw (13.1,-0.25) node[circle, minimum size = 0.05cm, fill=yellow!20] {\footnotesize$ D $};
 
 \draw (-2.1,-2.25) node[circle, minimum size = 0.05cm, fill=yellow!20] {\footnotesize$ A $};
 \draw (-0.2,-2.25) node[circle, minimum size = 0.05cm, fill=blue!20] {\footnotesize$1 $};
 \draw (1.7,-2.25) node[circle, minimum size = 0.05cm, fill=blue!20] {\footnotesize$2 $};
 \draw (3.6,-2.25) node[circle, minimum size = 0.05cm, fill=blue!20] {\footnotesize$3 $};
 \draw (5.5,-2.25) node[circle, minimum size = 0.05cm, fill=blue!20] {\footnotesize $4$};
 \draw (7.4,-2.25) node[circle, minimum size = 0.05cm, fill=blue!20] {\footnotesize$5 $};
 \draw (9.3,-2.25) node[circle, minimum size = 0.05cm, fill=blue!20] {\footnotesize$6 $};
 \draw (11.2,-2.25) node[circle, minimum size = 0.05cm, fill=blue!20] {\footnotesize$7 $};
 \draw (13.1,-2.25) node[circle, minimum size = 0.05cm, fill=yellow!20] {\footnotesize$ D $};
 
 \draw [red, thick] (6.15,-3.22) circle [radius=0.25cm];
 \draw [red, thick] (8.05,-3.22) circle [radius=0.25cm];
 \draw [red, thick] (9.95,-3.22) circle [radius=0.25cm];
 \draw [red, thick] (11.85,-3.22) circle [radius=0.25cm];
\end{tikzpicture}
}
\caption{Piecewise discrete geodesic (middle) and cubic B\'{e}zier curve (bottom). $K=8$, $\delta=5\cdot10^{-2}$, $\gamma=10^{-3}$.}
\label{fig:circleTriangle}
\end{figure}

\begin{figure}[h]
\centering
\resizebox{1.0\linewidth}{!}{
\begin{tikzpicture}[x=1cm, y=1cm]
\draw[-, very thick] (6,0.5) --(8.5,3) -- (11.25,3) -- (13.75,0.5);
  
\draw (6,0.5) coordinate (A);
\draw (8.5,3) coordinate (B);
\draw (11.25,3) coordinate (C);
\draw (13.75,0.5) coordinate (D);
\draw [very thick](A) .. controls (B) and (C) .. (D);
\draw (6,0.5) node[circle, minimum size = 0.5cm, fill=yellow!20] {$ A $};
\draw (8.5,3) node[circle, minimum size = 0.5cm, fill=yellow!20] {$ B $};
\draw (11.25,3) node[circle, minimum size = 0.5cm, fill=yellow!20] {$ C $};
\draw (13.75,0.5) node[circle, minimum size = 0.5cm, fill=yellow!20] {$ D $};
 \draw (6.9482421875,1.3203125)[below] node[circle, minimum size = 0.05cm, fill=blue!20] {\small$ 1 $};
 \draw (7.9140625,1.90625)[below] node[circle, minimum size = 0.05cm, fill=blue!20] {\small$2 $};
 \draw (8.8916015625,2.2578125)[below] node[circle, minimum size = 0.05cm, fill=blue!20] {\small$3 $};
 \draw (9.875,2.375)[below] node[circle, minimum size = 0.05cm, fill=blue!20] {\small$4 $};
 \draw (10.8583984375,2.2578125)[below] node[circle, minimum size = 0.05cm, fill=blue!20] {\small$5 $};
 \draw (11.8359375,1.90625)[below] node[circle, minimum size = 0.05cm, fill=blue!20] {\small$6 $};
 \draw (12.8017578125,1.3203125)[below] node[circle, minimum size = 0.05cm, fill=blue!20] {\small$7 $};
  
 \draw (6.8333333,4/3)[above] node[circle, minimum size = 0.05cm, fill=green!20] {\small$1 $};
 \draw (23/3,2.16666666)[above] node[circle, minimum size = 0.05cm, fill=green!20] {\small$2 $};
 \draw (9.875,3)[above] node[circle, minimum size = 0.05cm, fill=green!20] {\small$4 $};
 \draw (12.08333333,2.16666666)[above] node[circle, minimum size = 0.05cm, fill=green!20] {\small$6 $};
 \draw (12.916666666,4/3)[above] node[circle, minimum size = 0.05cm, fill=green!20] {\small$7 $};
 
\draw (-1.45 , 1.75 ) node [align = center] {\includegraphics[width=1.9cm, keepaspectratio]{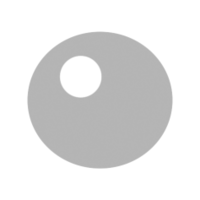}};
\draw (0.55, 1.75) node [align = center] {\includegraphics[width=1.9cm, keepaspectratio]{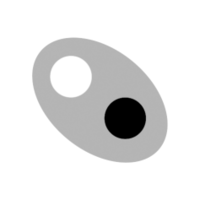}};
\draw (2.55, 1.75) node [align = center] {\includegraphics[width=1.9cm, keepaspectratio]{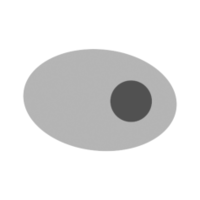}};
\draw (4.55, 1.75) node [align = center] {\includegraphics[width=1.9cm, keepaspectratio]{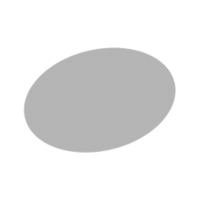}};
\draw [-, thick](-2.4,0.8) -- (-2.4,2.7) -- (-0.5, 2.7) -- (-0.5,0.8) -- (-2.4, 0.8);
\draw [-, thick](-0.4,0.8) -- (-0.4,2.7) --  (1.5, 2.7) --  (1.5,0.8) -- (-0.4, 0.8);
\draw [-, thick] (1.6,0.8) --  (1.6,2.7) --  (3.5, 2.7) --  (3.5,0.8) --  (1.6, 0.8);
\draw [-, thick] (3.6,0.8) --  (3.6,2.7) --  (5.5, 2.7) --  (5.5,0.8) --  (3.6, 0.8);
\draw (-1.45,-0.9) node [align = center] {\includegraphics[width=1.8cm, keepaspectratio]{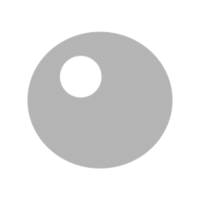}};
\draw (0.45,-0.9) node [align = center] {\includegraphics[width=1.8cm, keepaspectratio]{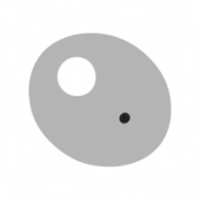}};
\draw (2.35,-0.9) node [align = center] {\includegraphics[width=1.8cm, keepaspectratio]{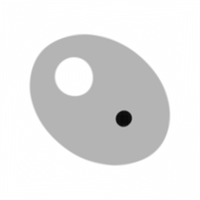}};
\draw (4.25,-0.9) node [align = center] {\includegraphics[width=1.8cm, keepaspectratio]{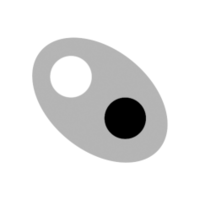}};
\draw (6.15,-0.9) node [align = center] {\includegraphics[width=1.8cm, keepaspectratio]{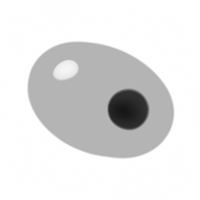}};
\draw (8.05,-0.9) node [align = center] {\includegraphics[width=1.8cm, keepaspectratio]{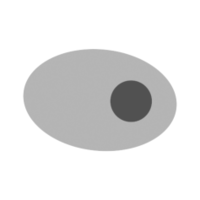}};
\draw (9.95,-0.9) node [align = center] {\includegraphics[width=1.8cm, keepaspectratio]{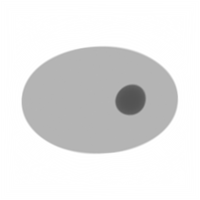}};
\draw (11.85,-0.9) node [align = center] {\includegraphics[width=1.8cm, keepaspectratio]{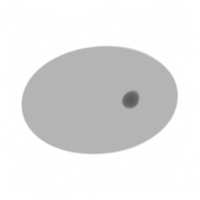}};
\draw (13.75,-0.9) node [align = center] {\includegraphics[width=1.8cm, keepaspectratio]{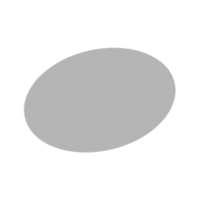}};
\draw [-, thick](-2.35,0) -- (-2.35,-1.8) -- (-0.55, -1.8) -- (-0.55,0) -- (-2.35, 0);
\draw [-, thick](-0.45,0) -- (-0.45,-1.8) --  (1.35, -1.8) --  (1.35,0) -- (-0.45, 0);
\draw [-, thick] (1.45,0) --  (1.45,-1.8) --  (3.25, -1.8) --  (3.25,0) --  (1.45, 0);
\draw [-, thick] (3.35,0) --  (3.35,-1.8) --  (5.15, -1.8) --  (5.15,0) --  (3.35, 0);
\draw [-, thick] (5.25,0) --  (5.25,-1.8) --  (7.05, -1.8) --  (7.05,0) --  (5.25, 0);
\draw [-, thick] (7.15,0) --  (7.15,-1.8) --  (8.95, -1.8) --  (8.95,0) --  (7.15, 0);
\draw [-, thick] (9.05,0) --  (9.05,-1.8) -- (10.85, -1.8) -- (10.85,0) --  (9.05, 0);
\draw [-, thick](10.95,0) -- (10.95,-1.8) -- (12.75, -1.8) -- (12.75,0) -- (10.95, 0);
\draw [-, thick](12.85,0) -- (12.85,-1.8) -- (14.65, -1.8) -- (14.65,0) -- (12.85, 0);
\draw (-1.45,-2.9) node [align = center] {\includegraphics[width=1.8cm, keepaspectratio]{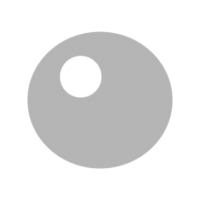}};
\draw (0.45,-2.9) node [align = center] {\includegraphics[width=1.8cm, keepaspectratio]{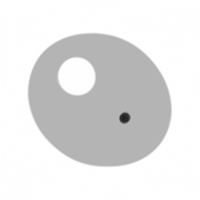}};
\draw (2.35,-2.9) node [align = center] {\includegraphics[width=1.8cm, keepaspectratio]{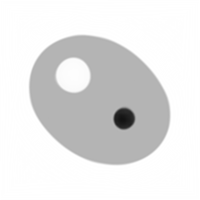}};
\draw (4.25,-2.9) node [align = center] {\includegraphics[width=1.8cm, keepaspectratio]{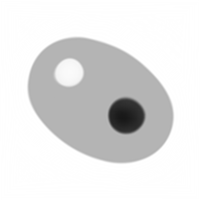}};
\draw (6.15,-2.9) node [align = center] {\includegraphics[width=1.8cm, keepaspectratio]{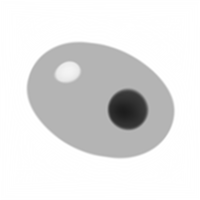}};
\draw (8.05,-2.9) node [align = center] {\includegraphics[width=1.8cm, keepaspectratio]{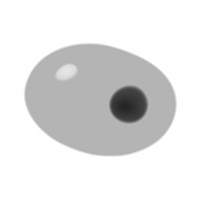}};
\draw (9.95,-2.9) node [align = center] {\includegraphics[width=1.8cm, keepaspectratio]{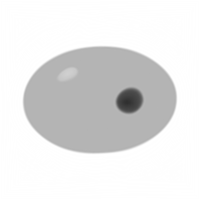}};
\draw (11.85,-2.9) node [align = center] {\includegraphics[width=1.8cm, keepaspectratio]{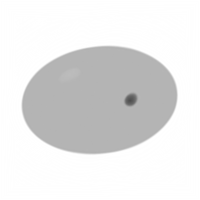}};
\draw (13.75,-2.9) node [align = center] {\includegraphics[width=1.8cm, keepaspectratio]{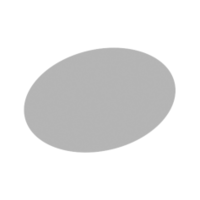}};
\draw [-, thick](-2.35,-2) -- (-2.35,-3.8) -- (-0.55, -3.8) -- (-0.55,-2) -- (-2.35, -2);
\draw [-, thick](-0.45,-2) -- (-0.45,-3.8) --  (1.35, -3.8) --  (1.35,-2) -- (-0.45, -2);
\draw [-, thick] (1.45,-2) --  (1.45,-3.8) --  (3.25, -3.8) --  (3.25,-2) --  (1.45, -2);
\draw [-, thick] (3.35,-2) --  (3.35,-3.8) --  (5.15, -3.8) --  (5.15,-2) --  (3.35, -2);
\draw [-, thick] (5.25,-2) --  (5.25,-3.8) --  (7.05, -3.8) --  (7.05,-2) --  (5.25, -2);
\draw [-, thick] (7.15,-2) --  (7.15,-3.8) --  (8.95, -3.8) --  (8.95,-2) --  (7.15, -2);
\draw [-, thick] (9.05,-2) --  (9.05,-3.8) -- (10.85, -3.8) -- (10.85,-2) --  (9.05, -2);
\draw [-, thick](10.95,-2) -- (10.95,-3.8) -- (12.75, -3.8) -- (12.75,-2) -- (10.95, -2);
\draw [-, thick](12.85,-2) -- (12.85,-3.8) -- (14.65, -3.8) -- (14.65,-2) -- (12.85, -2);
\draw (-2.15,2.45) node[circle, minimum size = 0.05cm, fill=yellow!20] {\footnotesize$ A $};
\draw (-0.15,2.45) node[circle, minimum size = 0.05cm, fill=yellow!20] {\footnotesize$ B $};
\draw (1.85,2.45) node[circle, minimum size = 0.05cm, fill=yellow!20] {\footnotesize$ C $};
\draw (3.85,2.45) node[circle, minimum size = 0.05cm, fill=yellow!20] {\footnotesize$ D $};
 \draw (-2.1,-0.25) node[circle, minimum size = 0.05cm, fill=yellow!20] {\footnotesize$ A $};
 \draw (-0.2,-0.25) node[circle, minimum size = 0.05cm, fill=green!20] {\footnotesize$ 1 $};
 \draw (1.7,-0.25) node[circle, minimum size = 0.05cm, fill=green!20] {\footnotesize$2 $};
 \draw (3.6,-0.25) node[circle, minimum size = 0.05cm, fill=yellow!20] {\footnotesize$B $};
 \draw (5.5,-0.25) node[circle, minimum size = 0.05cm, fill=green!20] {\footnotesize$ 4 $};
 \draw (7.4,-0.25) node[circle, minimum size = 0.05cm, fill=yellow!20] {\footnotesize$C $};
 \draw (9.3,-0.25) node[circle, minimum size = 0.05cm, fill=green!20] {\footnotesize$6 $};
 \draw (11.2,-0.25) node[circle, minimum size = 0.05cm, fill=green!20] {\footnotesize$7 $};
 \draw (13.1,-0.25) node[circle, minimum size = 0.05cm, fill=yellow!20] {\footnotesize$ D $};
 
 \draw (-2.1,-2.25) node[circle, minimum size = 0.05cm, fill=yellow!20] {\footnotesize$ A $};
 \draw (-0.2,-2.25) node[circle, minimum size = 0.05cm, fill=blue!20] {\footnotesize$1 $};
 \draw (1.7,-2.25) node[circle, minimum size = 0.05cm, fill=blue!20] {\footnotesize$2 $};
 \draw (3.6,-2.25) node[circle, minimum size = 0.05cm, fill=blue!20] {\footnotesize$3 $};
 \draw (5.5,-2.25) node[circle, minimum size = 0.05cm, fill=blue!20] {\footnotesize $4$};
 \draw (7.4,-2.25) node[circle, minimum size = 0.05cm, fill=blue!20] {\footnotesize$5 $};
 \draw (9.3,-2.25) node[circle, minimum size = 0.05cm, fill=blue!20] {\footnotesize$6 $};
 \draw (11.2,-2.25) node[circle, minimum size = 0.05cm, fill=blue!20] {\footnotesize$7 $};
 \draw (13.1,-2.25) node[circle, minimum size = 0.05cm, fill=yellow!20] {\footnotesize$ D $};
\end{tikzpicture}
}
\caption{Piecewise discrete geodesic (middle) and cubic B\'{e}zier curve (bottom). $K=8$, $\delta=5\cdot10^{-3}$, $\gamma=10^{-3}$.}
\label{fig:holes}
\end{figure}
Fig. \ref{fig:circleTriangle}  and Fig. \ref{fig:holes}  show discrete B\'ezier curves for two different test cases to highlight some of the general characteristics of 
B\'ezier curves in the space of images equipped with the Riemannian structure of the metamorphosis approach.
In both cases the initial image at $t=0$ and the final image at $t=1$ coincide with the corresponding control images, where as for the other control images 
the cubic B\'ezier curves approximately recovers them at times $t=\tfrac13$ and $t=\tfrac23$. The B\'ezier curve is smooth in time and structures in the images are smoothly blended and transported (cf. also the accompanying video sequence). The impact of the control images is global in time. 
Thus particular features of a single control image persist over the whole time interval $(0,1)$. 
In Fig. \ref{fig:circleTriangle} this can be best observed for the rectangular outline of the second and third control images and for the lower tip of the triangle.
In Fig. \ref{fig:holes} one obtains a simultaneous rotation of the emerging grey ellipsoid and a continuous fading in and out of the white and black circle, respectively.
\begin{wrapfigure}{r}{.5\linewidth}
\vspace*{-3.2ex}
\centering
\resizebox{1.0\linewidth}{!}{
\begin{tikzpicture}[x=1cm, y=1cm]
\draw (0 ,0) node [align = center] {\includegraphics[width=2.4cm, keepaspectratio]{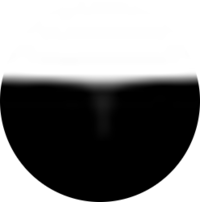}};
\draw (2.7, 0) node [align = center] {\includegraphics[width=2.4cm, keepaspectratio]{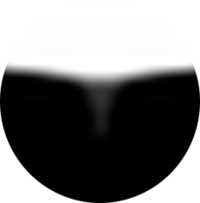}};
\draw (5.4, 0) node [align = center] {\includegraphics[width=2.4cm, keepaspectratio]{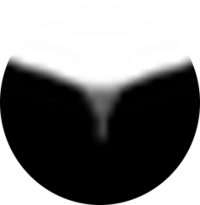}};
\draw (8.1, 0) node [align = center] {\includegraphics[width=2.4cm, keepaspectratio]{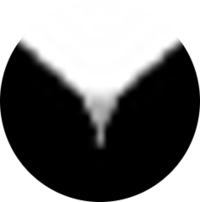}};
\draw [red, ultra thick] (0,0) circle [radius=1.2cm];
\draw [red, ultra thick] (2.7,0) circle [radius=1.2cm];
\draw [red, ultra thick] (5.4,0) circle [radius=1.2cm];
\draw [red, ultra thick] (8.1,0) circle [radius=1.2cm];
\end{tikzpicture}
}
\vspace*{-4.8ex}

\caption{Zoom into marked regions in Figure \ref{fig:circleTriangle}.}
\vspace*{-3ex}
\label{fig:circleTriangleIntensity}
\end{wrapfigure}
Compared to this the piecewise geodesic interpolation is not smooth in time and local features of the input images spread just over two consecutive time intervals.
Note that the metamorphosis B\'ezier curve in Figure\,\ref{fig:circleTriangle} is not solely generated by a transport.
Indeed, the generation of the lower tip of the triangle is not possible via a flow of diffeomorphisms. Hence, the 
triangle tip is generated via intensity modulation (cf. Fig.\,\ref{fig:circleTriangleIntensity}).

\section{Applications}
In this section we present two applications of B\'{e}zier curves in the space of images.
First, we use our approach for the modulation of human face interpolation.
Then we compute discrete B\'{e}zier curves to reconstruct animations of shapes. 

\subsection{Modulation of Interpolation Path}
B\'{e}zier curves can be used to modulate a (geodesic) interpolation path between two given images via prescribing additional control images, which are used to
let additional image pattern appear and spread along a smooth and natural looking path connecting the two end images. 
 Figures \ref{fig:Faces-AE-BW-BG} and \ref{fig:teaser} display a quadratic and a cubic B\'{e}zier curve between different portraits.
 Furthermore the B\'{e}zier curves are compared with piecewise geodesic interpolations between the control images.
The resulting path between the start and the end image ((A,C) and (A,D), respectively) consists of natural looking facial image for all times. 
The resulting path in image space is smooth in time. 
Features of the control images are clearly visible and significantly spread in time over the image sequence.
\Eg in Fig. \ref{fig:Faces-AE-BW-BG} the hairstyle of face $C$ is clearly visible in the images at time steps $2-7$ of the B\'{e}zier curve.
Furthermore, the shape of the chin of face $A$ carries over up to the time step $5-6$ and the moustache from face $B$ spreads over the whole sequence 
from time step $1$ to $7$.
\begin{figure}
\centering
\resizebox{1.0\linewidth}{!}{
\begin{tikzpicture}[x=1cm, y=1cm]
  \draw[-, very thick] (6,0.5) -- (9.875,3) -- (13.75,0.5);
  
\draw (6,0.5) coordinate (A);
\draw (9.875,3) coordinate (B);
\draw (13.75,0.5) coordinate (C);
\draw (103/12,13/6) coordinate (P);
\draw (67/6,13/6) coordinate (Q);
\draw [very thick] (A) .. controls (P) and (Q) .. (C);
 \draw (6.96875,67/64)[below] node[circle, minimum size = 0.05cm, fill=blue!20] {\small$ 1 $};
 \draw (7.9375,23/16)[below] node[circle, minimum size = 0.05cm, fill=blue!20] {\small$2 $};
 \draw (8.90625,107/64)[below] node[circle, minimum size = 0.05cm, fill=blue!20] {\small$3 $};
 \draw (9.875,7/4)[below] node[circle, minimum size = 0.05cm, fill=blue!20] {\small$4 $};
 \draw (10.84375,107/64)[below] node[circle, minimum size = 0.05cm, fill=blue!20] {\small$5 $};
 \draw (11.8125,23/16)[below] node[circle, minimum size = 0.05cm, fill=blue!20] {\small$6 $};
 \draw (12.78125,67/64)[below] node[circle, minimum size = 0.05cm, fill=blue!20] {\small$7 $};
 
 \draw (6.96875,9/8)[above] node[circle, minimum size = 0.05cm, fill=green!20] {\small$1 $};
 \draw (7.9375,7/4)[above] node[circle, minimum size = 0.05cm, fill=green!20] {\small$2 $};
 \draw (8.90625,19/8)[above] node[circle, minimum size = 0.05cm, fill=green!20] {\small$3 $};
 \draw (10.84375,19/8)[above] node[circle, minimum size = 0.05cm, fill=green!20] {\small$5 $};
 \draw (11.8125,7/4)[above] node[circle, minimum size = 0.05cm, fill=green!20] {\small$6 $};
 \draw (12.78125,9/8)[above] node[circle, minimum size = 0.05cm, fill=green!20] {\small$7 $};
\draw (6,0.5) node[circle, minimum size = 0.05cm, fill=yellow!20] {$ A $};
\draw (9.875,3) node[circle, minimum size = 0.05cm, fill=yellow!20] {$ B $};
\draw (13.75,0.5) node[circle, minimum size = 0.05cm, fill=yellow!20] {$ C $};
 
\draw (-1.45 , 1.75 ) node [align = center] {\includegraphics[width=1.9cm, keepaspectratio]{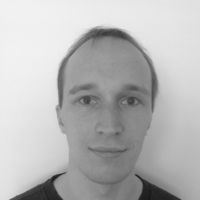}};
\draw (0.55, 1.75) node [align = center] {\includegraphics[width=1.9cm, keepaspectratio]{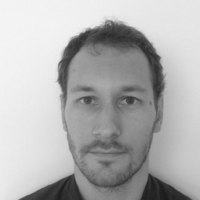}};
\draw (2.55, 1.75) node [align = center] {\includegraphics[width=1.9cm, keepaspectratio]{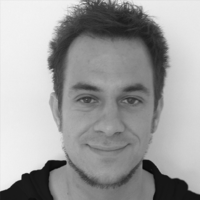}};
\draw (-1.45,-0.9) node [align = center] {\includegraphics[width=1.8cm, keepaspectratio]{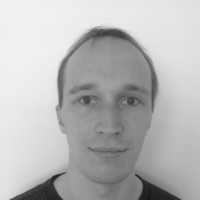}};
\draw (0.45,-0.9) node [align = center] {\includegraphics[width=1.8cm, keepaspectratio]{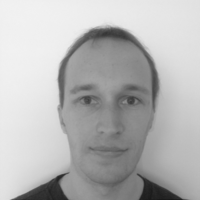}};
\draw (2.35,-0.9) node [align = center] {\includegraphics[width=1.8cm, keepaspectratio]{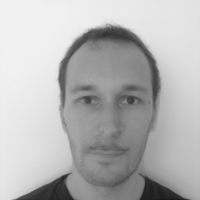}};
\draw (4.25,-0.9) node [align = center] {\includegraphics[width=1.8cm, keepaspectratio]{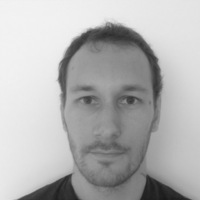}};
\draw (6.15,-0.9) node [align = center] {\includegraphics[width=1.8cm, keepaspectratio]{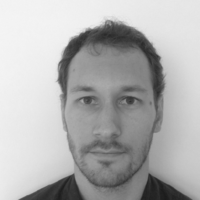}};
\draw (8.05,-0.9) node [align = center] {\includegraphics[width=1.8cm, keepaspectratio]{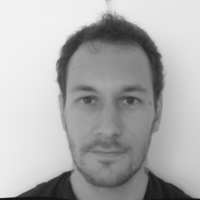}};
\draw (9.95,-0.9) node [align = center] {\includegraphics[width=1.8cm, keepaspectratio]{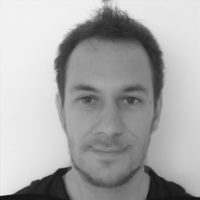}};
\draw (11.85,-0.9) node [align = center] {\includegraphics[width=1.8cm, keepaspectratio]{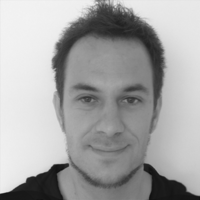}};
\draw (13.75,-0.9) node [align = center] {\includegraphics[width=1.8cm, keepaspectratio]{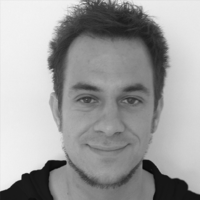}};
\draw (-1.45,-2.9) node [align = center] {\includegraphics[width=1.8cm, keepaspectratio]{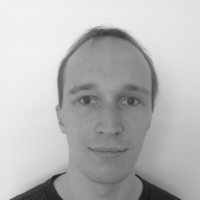}};
\draw (0.45,-2.9) node [align = center] {\includegraphics[width=1.8cm, keepaspectratio]{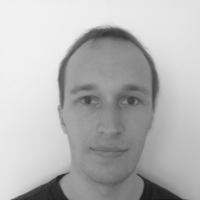}};
\draw (2.35,-2.9) node [align = center] {\includegraphics[width=1.8cm, keepaspectratio]{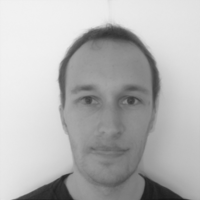}};
\draw (4.25,-2.9) node [align = center] {\includegraphics[width=1.8cm, keepaspectratio]{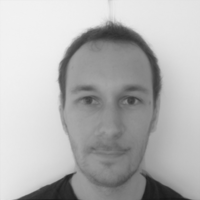}};
\draw (6.15,-2.9) node [align = center] {\includegraphics[width=1.8cm, keepaspectratio]{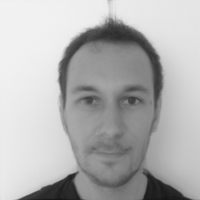}};
\draw (8.05,-2.9) node [align = center] {\includegraphics[width=1.8cm, keepaspectratio]{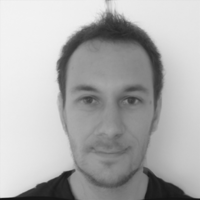}};
\draw (9.95,-2.9) node [align = center] {\includegraphics[width=1.8cm, keepaspectratio]{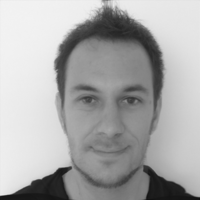}};
\draw (11.85,-2.9) node [align = center] {\includegraphics[width=1.8cm, keepaspectratio]{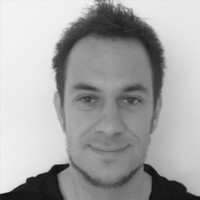}};
\draw (13.75,-2.9) node [align = center] {\includegraphics[width=1.8cm, keepaspectratio]{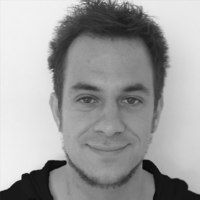}};
\draw (-2.15,2.45) node[circle, minimum size = 0.05cm, fill=yellow!20] {\footnotesize$ A $};
\draw (-0.15,2.45) node[circle, minimum size = 0.05cm, fill=yellow!20] {\footnotesize$ B $};
\draw (1.85,2.45) node[circle, minimum size = 0.05cm, fill=yellow!20] {\footnotesize$ C $};
 \draw (-2.1,-0.25) node[circle, minimum size = 0.05cm, fill=yellow!20] {\footnotesize$ A $};
 \draw (-0.2,-0.25) node[circle, minimum size = 0.05cm, fill=green!20] {\footnotesize$ 1 $};
 \draw (1.7,-0.25) node[circle, minimum size = 0.05cm, fill=green!20] {\footnotesize$2 $};
 \draw (3.6,-0.25) node[circle, minimum size = 0.05cm, fill=green!20] {\footnotesize$3 $};
 \draw (5.5,-0.25) node[circle, minimum size = 0.05cm, fill=yellow!20] {\footnotesize$ B $};
 \draw (7.4,-0.25) node[circle, minimum size = 0.05cm, fill=green!20] {\footnotesize$5 $};
 \draw (9.3,-0.25) node[circle, minimum size = 0.05cm, fill=green!20] {\footnotesize$6 $};
 \draw (11.2,-0.25) node[circle, minimum size = 0.05cm, fill=green!20] {\footnotesize$7 $};
 \draw (13.1,-0.25) node[circle, minimum size = 0.05cm, fill=yellow!20] {\footnotesize$ C $};
 
 \draw (-2.1,-2.25) node[circle, minimum size = 0.05cm, fill=yellow!20] {\footnotesize$ A $};
 \draw (-0.2,-2.25) node[circle, minimum size = 0.05cm, fill=blue!20] {\footnotesize$1 $};
 \draw (1.7,-2.25) node[circle, minimum size = 0.05cm, fill=blue!20] {\footnotesize$2 $};
 \draw (3.6,-2.25) node[circle, minimum size = 0.05cm, fill=blue!20] {\footnotesize$3 $};
 \draw (5.5,-2.25) node[circle, minimum size = 0.05cm, fill=blue!20] {\footnotesize $4$};
 \draw (7.4,-2.25) node[circle, minimum size = 0.05cm, fill=blue!20] {\footnotesize$5 $};
 \draw (9.3,-2.25) node[circle, minimum size = 0.05cm, fill=blue!20] {\footnotesize$6 $};
 \draw (11.2,-2.25) node[circle, minimum size = 0.05cm, fill=blue!20] {\footnotesize$7 $};
 \draw (13.1,-2.25) node[circle, minimum size = 0.05cm, fill=yellow!20] {\footnotesize$ C $};
\end{tikzpicture}
}
\caption{Piecewise discrete geodesic (middle) and quadratic B\'{e}zier curve (bottom) between human faces. $K=8$, $\delta=7.5\cdot10^{-3}$, $\gamma=10^{-3}$.}
\label{fig:Faces-AE-BW-BG}
\end{figure}

\subsection{Image Sketches as Control Shapes for Animation}
Discrete B\'{e}zier curves can also be used to design animation paths in the space of images based on very moderate user interaction.
In fact, to animate an object in an image
\begin{wrapfigure}{r}{.37\linewidth}
\vspace*{-3ex}
\centering
\resizebox{1.0\linewidth}{!}{
\begin{tikzpicture}[x=1cm, y=1cm]
 
\draw (0 ,0 ) node [align = center] {\includegraphics[width=1.9cm, keepaspectratio]{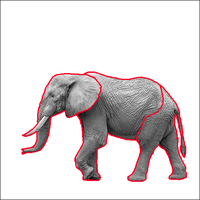}};
\draw (2, 0) node [align = center] {\includegraphics[width=1.9cm, keepaspectratio]{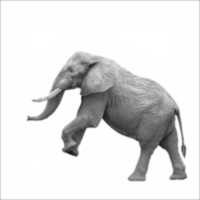}};
\draw (0, -2) node [align = center] {\includegraphics[width=1.9cm, keepaspectratio]{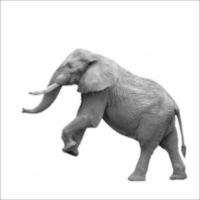}};
\draw (2, -2) node [align = center] {\includegraphics[width=1.9cm, keepaspectratio]{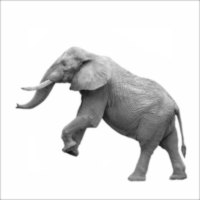}};
\draw (-0.65,0.65) node[circle, minimum size = 0.05cm, fill=yellow!20] {\tiny$ A $};
\draw (1.35,0.65) node[circle, minimum size = 0.05cm, fill=yellow!20] {\tiny$ B $};
\draw (-0.65,-1.35) node[circle, minimum size = 0.05cm, fill=yellow!20] {\tiny$ C $};
\draw (1.35,-1.35) node[circle, minimum size = 0.05cm, fill=yellow!20] {\tiny$ D $};
\end{tikzpicture}
}
\vspace*{-5ex}
\caption{Modeling of control images.}
\vspace*{-3ex}
\label{fig:ElephantSegments}
\end{wrapfigure}
 the user might cut the object into pieces and generate control images via translation, rotation or scaling of the different pieces.
Then, de Casteljau's algorithm is used to generate from these sketches a smooth animation path in the space of images, which approximately recovers the poses described by the control 
images. As a proof of concept we consider in Fig. \ref{fig:ArmSketches} four control images (A-D), which are generated from a very simple model of an arm (top left).
While a geodesic curve between the first (A) and the last image (D) prefers to collapse the forearm (second row), the additional control points (B,C) give further guidance to reconstruct the bending of the arm along a discrete cubic B\'{e}zier curve (third row).
Next, we consider in Fig. \ref{fig:ElephantSegments} a photograph of an elephant cut into three pieces (A), which are reconfigured into four control images 
(A-D). These control images are then taken into account to compute a discrete cubic B\'{e}zier curve (Figure \ref{fig:elephant}).
The resulting sequence represents the turning and stretching of the head and the raising of the forelegs.  
\begin{figure}
\centering
\resizebox{0.9\linewidth}{!}{
\begin{tikzpicture}[x=1cm, y=1cm]
\draw (-1.45 , 1.35 ) node [align = center] {\includegraphics[width=1.8cm, keepaspectratio]{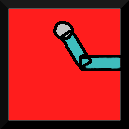}};
\draw (1.4 , 1.35 ) node [align = center] {\includegraphics[width=1.8cm, keepaspectratio]{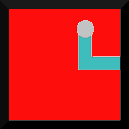}};
\draw (3.3, 1.35) node [align = center] {\includegraphics[width=1.8cm, keepaspectratio]{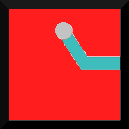}};
\draw (5.2, 1.35) node [align = center] {\includegraphics[width=1.8cm, keepaspectratio]{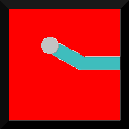}};
\draw (7.1, 1.35) node [align = center] {\includegraphics[width=1.8cm, keepaspectratio]{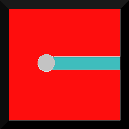}};
\draw (-1.45,-0.65) node [align = center] {\includegraphics[width=1.8cm, keepaspectratio]{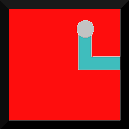}};
\draw (0.45,-0.65) node [align = center] {\includegraphics[width=1.8cm, keepaspectratio]{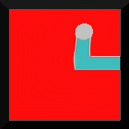}};
\draw (2.35,-0.65) node [align = center] {\includegraphics[width=1.8cm, keepaspectratio]{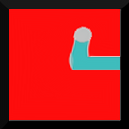}};
\draw (4.25,-0.65) node [align = center] {\includegraphics[width=1.8cm, keepaspectratio]{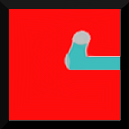}};
\draw (6.15,-0.65) node [align = center] {\includegraphics[width=1.8cm, keepaspectratio]{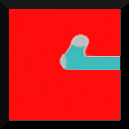}};
\draw (8.05,-0.65) node [align = center] {\includegraphics[width=1.8cm, keepaspectratio]{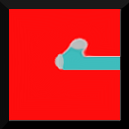}};
\draw (9.95,-0.65) node [align = center] {\includegraphics[width=1.8cm, keepaspectratio]{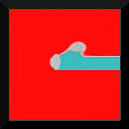}};
\draw (11.85,-0.65) node [align = center] {\includegraphics[width=1.8cm, keepaspectratio]{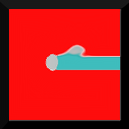}};
\draw (13.75,-0.65) node [align = center] {\includegraphics[width=1.8cm, keepaspectratio]{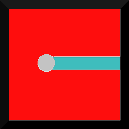}};
\draw (-1.45,-2.65) node [align = center] {\includegraphics[width=1.8cm, keepaspectratio]{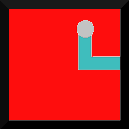}};
\draw (0.45,-2.65) node [align = center] {\includegraphics[width=1.8cm, keepaspectratio]{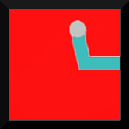}};
\draw (2.35,-2.65) node [align = center] {\includegraphics[width=1.8cm, keepaspectratio]{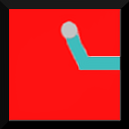}};
\draw (4.25,-2.65) node [align = center] {\includegraphics[width=1.8cm, keepaspectratio]{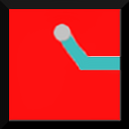}};
\draw (6.15,-2.65) node [align = center] {\includegraphics[width=1.8cm, keepaspectratio]{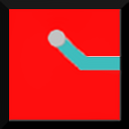}};
\draw (8.05,-2.65) node [align = center] {\includegraphics[width=1.8cm, keepaspectratio]{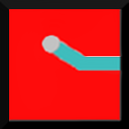}};
\draw (9.95,-2.65) node [align = center] {\includegraphics[width=1.8cm, keepaspectratio]{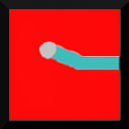}};
\draw (11.85,-2.65) node [align = center] {\includegraphics[width=1.8cm, keepaspectratio]{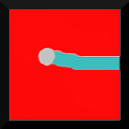}};
\draw (13.75,-2.65) node [align = center] {\includegraphics[width=1.8cm, keepaspectratio]{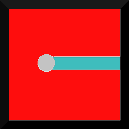}};
\draw (0.75,2) node[circle, minimum size = 0.05cm, fill=yellow!20] {\footnotesize$ A $};
\draw (2.65,2) node[circle, minimum size = 0.05cm, fill=yellow!20] {\footnotesize$ B $};
\draw (4.55,2) node[circle, minimum size = 0.05cm, fill=yellow!20] {\footnotesize$ C $};
\draw (6.45,2) node[circle, minimum size = 0.05cm, fill=yellow!20] {\footnotesize$ D $};
 \draw (-2.1,0) node[circle, minimum size = 0.05cm, fill=yellow!20] {\footnotesize$ A $};
 \draw (-0.2,0) node[circle, minimum size = 0.05cm, fill=green!20] {\footnotesize$ 1 $};
 \draw (1.7,0) node[circle, minimum size = 0.05cm, fill=green!20] {\footnotesize$2 $};
 \draw (3.6,0) node[circle, minimum size = 0.05cm, fill=green!20] {\footnotesize$3 $};
 \draw (5.5,0) node[circle, minimum size = 0.05cm, fill=green!20] {\footnotesize$ 4 $};
 \draw (7.4,0) node[circle, minimum size = 0.05cm, fill=green!20] {\footnotesize$5 $};
 \draw (9.3,0) node[circle, minimum size = 0.05cm, fill=green!20] {\footnotesize$6 $};
 \draw (11.2,0) node[circle, minimum size = 0.05cm, fill=green!20] {\footnotesize$7 $};
 \draw (13.1,0) node[circle, minimum size = 0.05cm, fill=yellow!20] {\footnotesize$ D $};
 
 \draw (-2.1,-2) node[circle, minimum size = 0.05cm, fill=yellow!20] {\footnotesize$ A $};
 \draw (-0.2,-2) node[circle, minimum size = 0.05cm, fill=blue!20] {\footnotesize$1 $};
 \draw (1.7,-2) node[circle, minimum size = 0.05cm, fill=blue!20] {\footnotesize$2 $};
 \draw (3.6,-2) node[circle, minimum size = 0.05cm, fill=blue!20] {\footnotesize$3 $};
 \draw (5.5,-2) node[circle, minimum size = 0.05cm, fill=blue!20] {\footnotesize $4$};
 \draw (7.4,-2) node[circle, minimum size = 0.05cm, fill=blue!20] {\footnotesize$5 $};
 \draw (9.3,-2) node[circle, minimum size = 0.05cm, fill=blue!20] {\footnotesize$6 $};
 \draw (11.2,-2) node[circle, minimum size = 0.05cm, fill=blue!20] {\footnotesize$7 $};
 \draw (13.1,-2) node[circle, minimum size = 0.05cm, fill=yellow!20] {\footnotesize$ D $};
\end{tikzpicture}
}
\caption{A simple model of an arm (top left) is split up into three pieces, they are reconfigured into four control images (A-D).
A geodesic interpolation of the images (A) and (D) (middle row) and a discrete cubic B\'{e}zier curve with $K=8$, $\delta = 10^{-2}$, $\gamma = 10^{-3}$ is computed (bottom row).}
\label{fig:ArmSketches}
\end{figure}

\begin{figure}
\centering
\resizebox{0.9\linewidth}{!}{
\begin{tikzpicture}[x=1cm, y=1cm]
\draw (0,0) node [align = center] {\includegraphics[width=3.3cm, keepaspectratio]{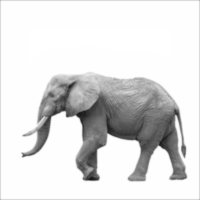}};
\draw (3.5,0) node [align = center] {\includegraphics[width=3.3cm, keepaspectratio]{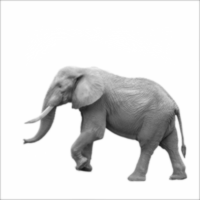}};
\draw (7,0) node [align = center] {\includegraphics[width=3.3cm, keepaspectratio]{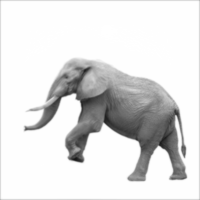}};
\draw (10.5,0) node [align = center] {\includegraphics[width=3.3cm, keepaspectratio]{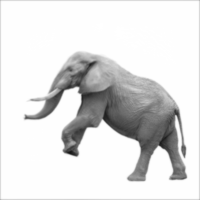}};
\draw (14,0) node [align = center] {\includegraphics[width=3.3cm, keepaspectratio]{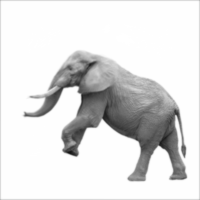}};

\draw (0,-3.5) node [align = center] {\includegraphics[width=3.3cm, keepaspectratio]{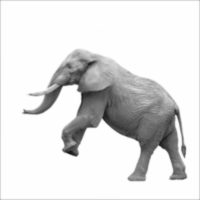}};
\draw (3.5,-3.5) node [align = center] {\includegraphics[width=3.3cm, keepaspectratio]{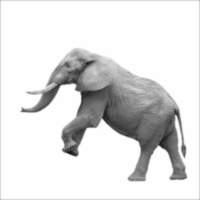}};
\draw (7,-3.5) node [align = center] {\includegraphics[width=3.3cm, keepaspectratio]{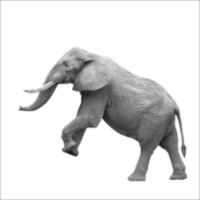}};
\draw (10.5,-3.5) node [align = center] {\includegraphics[width=3.3cm, keepaspectratio]{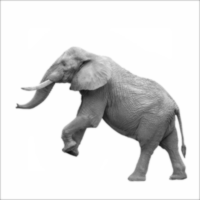}};
 
 \draw (-1.35,1.35) node[circle, minimum size = 0.05cm, fill=yellow!20] {\footnotesize$ A $};
 \draw (2.15,1.35) node[circle, minimum size = 0.05cm, fill=blue!20] {\footnotesize$1 $};
 \draw (5.65,1.35) node[circle, minimum size = 0.05cm, fill=blue!20] {\footnotesize$2 $};
 \draw (9.15,1.35) node[circle, minimum size = 0.05cm, fill=blue!20] {\footnotesize$3 $};
 \draw (12.65,1.35) node[circle, minimum size = 0.05cm, fill=blue!20] {\footnotesize $4$};
 
 \draw (-1.35,-2.2) node[circle, minimum size = 0.05cm, fill=blue!20] {\footnotesize$5 $};
 \draw (2.15,-2.2) node[circle, minimum size = 0.05cm, fill=blue!20] {\footnotesize$6 $};
 \draw (5.65,-2.2) node[circle, minimum size = 0.05cm, fill=blue!20] {\footnotesize$7 $};
 \draw (9.15,-2.2) node[circle, minimum size = 0.05cm, fill=yellow!20] {\footnotesize$ D $};
\end{tikzpicture}
}
\caption{An animation sequence based on the image of an elephant (www.fotolia.com \copyright eyetronic) via discrete cubic B\'{e}zier curve is shown for $K=8$, $\delta=4\cdot10^{-3}$, $\gamma=10^{-3}$.
The underlying control images are depicted in Fig. \ref{fig:ElephantSegments}.}
\label{fig:elephant}
\end{figure}

\section{Conclusions}
We have defined B\'{e}zier curves on the image manifold equipped with the metamorphosis metric.
Based on the notion of a discrete metamorphosis path energy and a corresponding discrete geodesic interpolation,
de Casteljau's algorithm allows the robust and efficient computation of 
discrete B\'{e}zier curves in the space of images. 
Striking features of this approach are the smoothness in time and the global impact of features of the control images. 
This approach can be regarded as a conceptual study for curve modeling on shape spaces.
Indeed, B\'{e}zier curves are just one prominent example for classes of curves constructed via hierarchical convex combination 
along straight lines.  Perspective future generalizations include the Neville--Aitken scheme for general polynomials and B-Spline curves.

\end{document}